\documentclass[dvips]{imsart}

\RequirePackage[OT1]{fontenc}
\usepackage{bbm}
\usepackage[numbers]{natbib}
\usepackage{amsthm,amsmath}
\usepackage{graphics,ifthen}
\usepackage{latexsym}
\usepackage{mathrsfs}
\usepackage{amssymb}


\startlocaldefs

\newtheoremstyle{example}{\topsep}{\topsep}%
     {}
     {}
     {\rmfamily}
     {}
     {\newline}
     {\thmname{#1}\thmnumber{ #2}\thmnote{ #3}}

   \theoremstyle{example}

\newcommand{\indic}{\mathbb{I}}

\numberwithin{equation}{section}
\theoremstyle{plain}
\newtheorem{thm}{Theorem}[section]

\newtheorem{prop}{Proposition}[section]

\newtheorem{rem}{Remark}[section]

\newcommand{\Lower}[2]{\smash{\lower #1 \hbox{#2}}}
\newcommand{\ben}{\begin{enumerate}}
\newcommand{\een}{\end{enumerate}}
\newcommand{\bi}{\begin{itemize}}
\newcommand{\ei}{\end{itemize}}

\newcommand{\DoubleRLarrow}[1]{\Lower{-0.01in}{$\underleftarrow{\Lower{0.07in}{$\overrightarrow{\vspace*{0.15in}\hspace*{#1}}$}}$}}

\newcommand{\SingleRarrow}[1]{\Lower{-0.08in}{$\underrightarrow{\hspace*{#1}}$}}

\endlocaldefs

\begin{document}

\begin{frontmatter}
\title{Coag-Frag duality for a class of stable Poisson-Kingman mixtures\protect} \runtitle{Coag/Frag Duality}

\begin{aug}
\author{\fnms{Lancelot F.} \snm{James}\thanksref{t1}\ead[label=e1]{lancelot@ust.hk}},

\thankstext{t1}{Supported in
part by the grant RGC-HKUST 600907 of the HKSAR.}

\runauthor{Lancelot F. James}

\affiliation{Hong Kong University of Science and Technology}

\address[a]{Lancelot F. James,\\ The Hong Kong University of Science and
Technology, \\Department of Information Systems, Business Statistics and Operations Management,\\
Clear Water Bay, Kowloon, Hong Kong.\\ \printead{e1}.}

\contributor{James, Lancelot F.}{Hong Kong University of Science
and Technology}

\end{aug}

\begin{abstract}
Exchangeable sequences of random probability measures (partitions of mass) and their corresponding exchangeable bridges
play an important role in a variety of
areas in probability, statistics and related areas, including
Bayesian statistics, physics, finance and machine learning. An area of theoretical as well as practical interest, is the study of coagulation and fragmentation operators on partitions of mass. In this regard, an interesting but formidable question is the identification of operators and distributional families on mass partitions that exhibit interesting duality relations.  In this paper we identify duality relations for a large sub-class of mixed Poisson-Kingman models generated by a stable subordinator.
Our results are natural generalizations of
the duality relations developed in Pitman~\cite{Pit99}, Bertoin
and Goldschmidt~\cite{BertoinGoldschmidt2004}, and Dong,
Goldschmidt and Martin~\cite{Dong2006}, for the two-parameter Poisson Dirichlet family. These results are deduced from results for corresponding bridges.
\end{abstract}

\begin{keyword}[class=AMS]
\kwd[Primary ]{60C05, 60G09} \kwd[; secondary ]{60G57,60E99}
\end{keyword}

\begin{keyword}
\kwd{Coagulation-Fragmentation Duality, Exchangeable Gibbs partitions, Poisson Kingman models, Two
parameter Poisson Dirichlet processes}
\end{keyword}


\end{frontmatter}
\section{Introduction}Exchangeable sequences of random probabilities living in the
space $\mathcal{P}=\{\mathbf{s}=(s_{1},s_{2},\ldots):s_{1}\ge
s_{2}\ge\cdots\ge 0 {\mbox { and }} \sum_{i=1}^{\infty}s_{i}=1\},$
and corresponding exchangeable random probability measures on
$[0,1],$ defined as
\begin{equation}
P(p)=\sum_{k=1}^{\infty}P_{i}\indic_{(U_{i}\leq p)},
\label{bridge}
\end{equation}
where $(U_{i})$ are iid Uniform$[0,1]$ variables independent of
$(P_{i})\in \mathcal{P} ,$ play an important role in a variety of
areas in probability, statistics and related areas, including
Bayesian statistics, physics, finance and machine learning. Some
references, are as
follows~\cite{Chatterjee, Ferg73, Feng,
Diaconis,IJ2001, JLP,Ruelle,
Kerov, Kingman75, Pit96}. Our primary references in this paper
will center around applications to coagulation/fragmentation
phenomena. For a general summary of some of these applications,
and for the concepts and notations we use in this exposition, we
refer to the monographs~\cite{BerFrag, Pit06}, and
also~\cite{BerLegall03}.

One of the most interesting examples in the literature is the
two-parameter Poisson-Dirichlet family of laws on $\mathcal{P},$
say $\mathrm{PD}(\alpha,\theta),$ indexed by $0\leq \alpha<1$ and
$\theta>-\alpha,$ as discussed in \cite{PY97}. The corresponding
$\mathrm{PD}(\alpha,\theta)$-bridge, denoted as
$P_{\alpha,\theta}(p),$ is the random distribution function
defined by setting $(P_{i})\sim PD(\alpha,\theta)$
in~(\ref{bridge}). The Poisson-Dirichlet $(\alpha,\theta)$ family
arises in connection with the lengths of excursions of Bessel
processes and often appear, in some guise, in the study of
phenomena involving positive $\alpha$-stable subordinators and/or
gamma subordinators. These processes also play an important role
in Bayesian statistics and machine learning. See
Bertoin~\cite{BerFrag} for applications to
coagulation/fragmentation phenomena and Ishwaran and
James~\cite{IJ2001} [see also Pitman~\cite{Pit96}]for applications
to Bayesian statistics, where in particular $P_{\alpha,\theta}$ is
referred to as a Pitman-Yor process. Under this name the process
has also been applied to problems arising in natural language
processing, see for instance \cite{Teh, Wood1,Wood2}. In fact as
shown explicitly in \cite{Wood1}, these methods are working with
coagulation/fragmentation operations at the level of the Poisson
Dirichlet random probability measures (bridges). They show these connections
lead to a significant reduction in the complexity of an
$\infty$-gram natural language model. When $\theta>0$ and
$\alpha=0$ $P_{0,\theta}$ is a Dirichlet process made popular by
Ferguson~\cite{Ferg73}.

In regards to general $(P_{i})\in {\mathcal P}$ an interesting
question arising in the study of coagulation and fragmentation
processes~\cite{BerFrag, Pit06} is as follows. For $X,Y$ random
exchangeable sequences in $\mathcal{P},$ describe in an
informative way the conditional distribution of $X|Y$ and $Y|X.$
Naturally $X$ and $Y$ should also have some interesting
interpretations. We also note that it is not necessarily the case
that both laws $X$ and $Y$ are initially known. This is the
essence of what is known as a coagulation-fragmentation duality,
and is generally a difficult problem. Generically this duality can be read using the following diagram
for $X,$ $Y$ in ${\mathcal P},$
\begin{eqnarray*}
  & X|Y &  \nonumber\\
 Y & \DoubleRLarrow{0.5in} & X \\
  & Y|X &\nonumber
\end{eqnarray*}

Pitman~\cite{Pit99} was able
to derive a remarkable duality formula for certain members of the
$\mathrm{PD}(\alpha,\theta)$ family, where in particular he
describes the relationships between $X\sim
\mathrm{PD}(\alpha\delta,\theta)$ and
$Y\sim\mathrm{PD}(\alpha,\theta)$ for $0\leq \delta<1.$ This
relationship acts in a multiplicative fashion on the first
component. The coagulation/fragmentation duality in
Pitman~\cite{Pit99} may be described in terms of the following
diagram as given in~\cite{Pit06}; for $0< \alpha<1, 0\leq
\delta<1, \theta>-\alpha\delta$,
\begin{eqnarray}
  & \mathrm{PD}(\delta,\frac{\theta}{\alpha})-\mathrm{Coag} &  \nonumber\\
  \mathrm{PD}(\alpha,\theta) & \DoubleRLarrow{0.5in} & \mathrm{PD}(\alpha\delta,\theta) \label{Pitman}\\
  & \mathrm{PD}(\alpha,-\alpha\delta)-\mathrm{Frag} &\nonumber
\end{eqnarray}
More recently, using the $\mathrm{PD}(0,\theta)$
family, Bertoin and Goldschmidt~\cite{BertoinGoldschmidt2004}
describe an additive duality relationship where $X\sim
\mathrm{PD}(0,\theta)$ and $Y\sim\mathrm{PD}(0,1+\theta).$ This
additive duality is generalized to the
$\mathrm{PD}(\alpha,\theta)$ family in Dong, Goldschmidt, and
Martin(DGM)~\cite{Dong2006}. Their results can be represented as follows, for $\theta>-\alpha,$ and $0\leq \alpha<1,$
\begin{eqnarray}
  & \beta_{(\frac{(1-\alpha)}{\alpha},\frac{(\theta+\alpha)}{\alpha})}-\mathrm{Coag} &  \nonumber\\
  \mathrm{PD}(\alpha,1+\theta) & \DoubleRLarrow{0.5in} & \mathrm{PD}(\alpha,\theta) \label{DGMd}\\
  & \mathrm{Frag}-\mathrm{PD}(\alpha,1-\alpha) &\nonumber
\end{eqnarray}
We will give a precise meaning of the Coag/Frag operators later.

In general, it is not clear how one
can obtain similar results for other $(\alpha,\theta)$ parameters
values or other families in $\mathcal{P}.$ In this paper we, using results we develop for bridges, identify a large class of laws on $\mathcal{P}$ where explicit duality relations exist. These can be seen as natural extensions of the results in~\cite{Pit99, BertoinGoldschmidt2004, Dong2006}. The class represents a sub-class of Poisson-Kingman mixtures generated by stable subordinators that we denote as having laws $\mathbb{P}_{\alpha}(\zeta),$ where $\zeta$ denotes a non-negative random variable. We describe more details of this class as well as relevant result for more general processes in the next section.
\section{Exchangeable bridges and partitions}
Following Bertoin~\cite[Definition 2.1, p.67]{BerFrag},(see also
Pitman\cite[section 5]{Pit06}), an infinite numerical sequence
$\mathbf{s}=(s_{1},s_{2},\ldots)$ is said to be a
\textbf{mass-partition} if $\mathbf{s}$ is an element of the
space,
$$
\mathcal{P}_{\mathrm{m}}=\{\mathbf{s}=(s_{1},s_{2},\ldots):s_{1}\ge
s_{2}\ge\cdots\ge 0 {\mbox { and }} \sum_{i=1}^{\infty}s_{i}\leq
1\}.
$$

The quantity
$$
s_{0}:=1-\sum_{i=1}^{\infty}s_{i},
$$
which may be $0,$ is referred to as the \emph{total mass of dust}.
From Bertoin~(\cite{BerFrag}, Definition 4.6, p. 191), a random
caglad process $b_{\mathbf{s}}$ on $[0,1]$ is said to be an
\textbf{s-bridge} if it is distributed as
$$
b_{\mathbf{s}}(y)=s_{0}y+\sum_{k=1}^{\infty}s_{i}\indic_{(U_{i}\leq
y)}, y\in[0,1],
$$
for $(U_{i})$  a sequence of iid Uniform$[0,1]$ random variables.
If $\mathbf{s}\sim\mathbb{P}$, i.e. if
$\mathbf{s}$ is randomized according to some law $\mathbb{P},$
then $b_{\mathbf{s}}$ is said to be a $\mathbb{P}$-bridge. It
follows that $\mathcal{P}$ is a subspace of
$\mathcal{P}_{\mathrm{m}}$ such that $\sum_{i=1}^{\infty}s_{i}=1.$
Furthermore, for all $\mathbf{s}\in P_{\mathbf{m}},$
$\mathrm{Rank}(s_{0}, s)\in \mathcal{P}.$ Hence we see that the random probability measure in~(\ref{bridge})
is a $\mathbb{P}$-bridge , with $(s_{i})\overset{d}=(P_{i})\in \mathcal{P}$ distributed according to some law $\mathbb{P}$ with $s_{0}=0.$ An important property,  which we shall exploit, is that the law of the $\mathbb{P}$-bridge is in bijection to the law of the sequence $(P_{i})\sim \mathbb{P}.$ Additionally let
$$
b^{-1}_{\mathbf{s}}(r)=\inf\{v\in [0,1]: b_{\mathbf{s}}(r)>r\},
r\in[0,1]
$$
denote the right continuous inverse of the bridge. Equivalently
this is a random quantile function.   An
exchangeable partition of $[n]=\{1,2,\ldots,n\}$ generated from an exchangeable
bridge, say $b_{\mathbf{s}},$ can be obtained by the equivalence relations
$$
i\sim j{\mbox { iff }}b^{-1}_{\mathbf{s}}(U'_{i})=b^{-1}_{\mathbf{s}}(U'_{j})
$$
based on $n$ iid Uniform$[0,1]$ variables
$(U'_{1},\ldots,U'_{n}).$ An infinite partition, $\Pi$ of $\mathbb{N}$ is formed by
considering a countably infinite set of uniforms.
The distribution of such infinite exchangeable partitions is referred to as an exchangeable partition probability function (EPPF). We see that the random probability measure in~(\ref{bridge})
is a $\mathbb{P}$-bridge , with $(s_{i})\overset{d}=(P_{i})\in \mathcal{P}$ distributed according to some law $\mathbb{P}$ with $s_{0}=0.$ An important property,  which we shall exploit, is that the law of the $\mathbb{P}$-bridge is in bijection to the law of the sequence $(P_{i})\sim \mathbb{P},$ and also to the corresponding EPPF , specifying the law of the exchangeable partition $\Pi$ with ranked frequencies $(P_{i}),$ which we shall refer to as a $\mathbb{P}$-EPPF.
In this manuscript we will also utilize properties of simple bridges. In particular if $\mathbf{s}=(u,0,\ldots)$ is a simple mass-partition then
$b_{u}(y)=(1-u)y+u\indic_{(U_{1}\leq y)}$ is referred to as a
\textbf{simple bridge}. If $u=s_{1}$ is a random variable then one has a randomized simple bridge given by,
\begin{equation}
b_{s_{1}}(y)=s_{0}y+s_{1}\indic_{(U_{1}\leq y)}
\label{simplebridge}
\end{equation}

\subsection{Poisson Kingman distributions determined by a stable subordinator}
Recall from Pitman~\cite{Pit02}, that for $0<\alpha<1$ a sequence $(P_{i})$ has a Poisson-Kingman law generated by a $\alpha$-stable subordinator with mixing distribution $\eta,$ say $\mathrm{PK}_{\alpha}(\eta),$ if its law can be constructed as follows; Let $(J_{i})$ denote the ranked jumps of a stable subordinator such that $T=\sum_{k=1}^{\infty}J_{k}$ is equivalent in distribution to a positive $\alpha$-stable random variable, with density denoted as $f_{\alpha}(t)$ and  whose log Laplace transform is given by $-C\omega^{\alpha}$ for some constant $C>0$ and each $\omega>0.$  Hereafter, due to scaling properties, we can take $C=1.$ Set $(P_{i}=J_{i}/T),$ then it follows that $(P_{i})$ has a $\mathrm{PD}(\alpha,0)$ distribution. Denote by $\mathrm{PD}(\alpha|t)$ the conditional distribution of $(P_{i})|T=t,$ then
$$
\mathrm{PK}_{\alpha}(\eta):=\int_{0}^{\infty}\mathrm{PD}(\alpha|t)\eta(dt)
$$
The $PD(\alpha,\theta)$ laws arises as a special case by choosing $\eta(dt)/dt$ proportional to $t^{-\theta}f_{\alpha}(t),$ which is the density of  a polynomially tilted stable random variable. The classical Poisson-Dirichlet case, $\mathrm{PD}(0,\theta),$ arises by letting $\alpha$ go to zero in an appropriate sense. An important feature of the general $\mathrm{PK}_{\alpha}(\eta)$ class of laws and its limiting cases, is that as shown in \cite{Pit02,Pit06,GnedinPitmanI}, see for instance Pitman\cite[Theorem 8, p.14]{Pit02}, that these are the only cases where the EPPF of an infinite exchangeable random partition $\Pi$ with ranked frequencies $(P_{i})$ has Gibbs form. Additionally, we will make use of the following fact, if $T$ is a random variable with distribution $\eta,$ then, from Pitman\cite[Proposition 13, p.20]{Pit02}, $S=T^{-\alpha}$ is the $\alpha$-DIVERSITY of the $\mathrm{PK}_{\alpha}(\eta)$ partition. That is, if $K_{n}$ denotes the number of distinct blocks of a $\mathrm{PK}_{\alpha}(\eta)$-EPPF partition of $[n],$ then $K_{n}/n^{\alpha}$ converges almost surely to $S$ as $n$ converges to $\infty,$  and almost surely,
$$
T=S^{-1/\alpha}:=\lim_{i\rightarrow \infty}{(i\Gamma(1-\alpha)P_{i})}^{-1/\alpha}
$$
In other words $S$ and $T$ are completely determined by the corresponding $(P_{i})$ sequence.
\subsection{The $\mathbb{P}_{\alpha}(\zeta)$ family}
As mentioned in the introduction, in this paper we show that one can extend the results of Pitman~\cite{Pit99} and Bertoin and Goldschmidt~\cite{BertoinGoldschmidt2004}, Dong, Goldschmidt, and
Martin(DGM)~\cite{Dong2006} to a large class of processes whose $\mathbb{P}$ law is given by $\mathrm{PK}_{\alpha}(\eta^{*}),$ where $\eta^{*}$ belongs to a class of mixing distributions corresponding to random variables of the form,
$$
T\overset{d}=\frac{\tau_{\alpha}(\zeta)}{\zeta^{1/\alpha}}.
$$
$\zeta$ is a non-negative random variable taken independent of  $(\tau_{\alpha}(s),s>0),$ which is a generalized gamma subordinator whose L\'evy exponent, i.e. its -log Laplace transform of $\tau_{\alpha}(1),$ is given by
\begin{equation}
\psi_{\alpha}(\omega)=(1+\omega)^{\alpha}-1
\label{GGexp}
\end{equation}
for $\omega>0.$ The conditional density of $T|\zeta$ is given by
$$
f_{\alpha}(s){\mbox e}^{-(s\zeta^{1/\alpha}-\zeta)}
$$
and the hence the density of $T$ can be expressed as,
$$
\eta^{*}(ds)/ds=f_{\alpha}(s)\int_{0}^{\infty}{\mbox e}^{-(sy^{1/\alpha}-y)}F_{\zeta}(dy)=f_{\alpha}(s)\mathbb{E}[{\mbox e}^{-(s\zeta^{1/\alpha}-\zeta)}]
$$
where $F_{\zeta}$ denotes the distribution function of $\zeta.$ Hence, if $\zeta$ is random, a conditional distribution of $\zeta|T=s$ is specified by
\begin{equation}
F_{\zeta,\alpha}(dy|s)\propto {\mbox e}^{-(sy^{1/\alpha}-y)}F_{\zeta}(dy).
\label{condzeta}
\end{equation}
It follows that, for fixed $\alpha,$ the law  of $(P_{i})\sim \mathrm{PK}_{\alpha}(\eta^{*})$ varies according to the distribution of $\zeta,$ and hence we denote this law as $\mathbb{P}_{\alpha}(\zeta):=\mathrm{PK}_{\alpha}(\eta^{*}).$ Importantly, the corresponding $\mathbb{P}_{\alpha}(\zeta)$-bridge can be written as
\begin{equation}
Q_{\alpha,\zeta}(y)\overset{d}=\frac{\tau_{\alpha}(\zeta y)}{\tau_{\alpha}(\zeta)}\overset{d}=\sum_{i=1}^{\infty}P_{i}\indic_{(U_{i}\leq
y)},  y\in[0,1]
\label{Laplacerpm}
\end{equation}
where, $(P_{i})\sim \mathbb{P}_{\alpha}(\zeta).$

This construction of $\mathbb{P}_{\alpha}(\zeta)$ laws coincides with random processes discussed in Pitman and Yor \cite[p. 877-878]{PY97}, which is used to prove  Pitman and Yor \cite[Proposition 21, p. 869]{PY97}. Proposition 21 of that work shows that if $\zeta\overset{d}=\gamma_{\theta/\alpha}$ where $\gamma_{\theta/\alpha}$ denotes a random variable with a gamma distribution with shape parameter $(\theta/\alpha),$ and scale $1,$  then $\mathbb{P}_{\alpha}(\gamma_{\theta/\alpha})=PD(\alpha,\theta)$ for $\theta>0.$ Note this does not include the case of $\mathrm{PD}(\alpha,\theta)$ for $-\alpha<\theta<0.$ However $\mathbb{P}_{\alpha}(0)=\mathrm{PD}(\alpha,0).$ Furthermore when $\zeta=b$ is a positive constant, $\mathbb{P}_{\alpha}(b)$ corresponds to the case of the Poisson-Kingman model determined by the generalized gamma subordinator as described in Pitman \cite[Section 5.2]{Pit02}. In this case the bridge $Q_{\alpha,b}$ has been studied from a Bayesian perspective in \cite{James2002,LMP1,LMP2, JLP2}. However it is evident that, due to the generality of $\zeta$, the class of $\mathbb{P}_{\alpha}(\zeta)$  laws is significantly larger than the special cases mentioned.

In order to establish our results we will work directly with $\mathbb{P}_{\alpha}(\zeta)$-bridges, $Q_{\alpha,\zeta}$.  In fact, we will show that working with $Q_{\alpha,\zeta}$ is rather transparent in terms of identifying which laws on $\mathcal{P}$ are related in the sense of Coag-Frag operators. While the operators we discuss are of similar type to those in \cite{Pit99, BertoinGoldschmidt2004, Dong2006}, we cannot rely on the fine properties of the $\mathrm{PD}(\alpha,\theta)$ family utilized by those authors. For example,
one can show that the coagulation operators in \cite{Pit99, BertoinGoldschmidt2004, Dong2006} are in bijection to the operation of composition of independent bridges. The dual relationship between compositions of independent bridges and coagulation operations can be found in the works of Bertoin and Le Gall, \cite{BerFrag, BerLegall00,
BerLegall03} and Pitman~\citep[Lemma 5.18]{Pit06}. Our coagulation operations will be defined via the compositions of generally dependent bridges. Nonetheless, for a given input sequence $(p_{i}),$ we are able to give good descriptions of the conditional distribution of the relevant coagulation operator, which as we shall show reduce to conditional distributions given the DIVERSITY or local time determined by the input sequence. We will also show that the dual fragmentation operators are exactly the same as those used in \cite{Pit99, BertoinGoldschmidt2004, Dong2006}, where, in contrast to the coagulations operators, our inputs are indeed independent of the respective $\mathrm{PD}(\alpha,-\alpha\delta)$ and $\mathrm{PD}(\alpha,1-\alpha)$ fragmenting variables in $\mathcal{P}.$
\section{Pitman style coagulation and fragmentation operations for $\mathbb{P}_{\alpha}(\zeta).$}
In order to establish an analogue of (\ref{Pitman}) for the $\mathbb{P}_{\alpha}(\zeta)$ family of laws we first identify an appropriate coagulation operation. As we mentioned in the previous section there is a close relationship between the notion of coagulation operators on $\mathcal{P},$ or in terms of corresponding exchangeable partitions, and the idea of composition of bridges, say $$
F_{1}(y)=\sum_{k=1}^{\infty}P^{(1)}_{k}\indic_{(V_{k}\leq y)}{\mbox { and }} F_{2}(y)=\sum_{k=1}^{\infty}P^{(2)}_{k}\indic_{(U_{k}\leq y)},$$
where $(V_{k})$ and $(U_{k})$ are independent sequences of iid uniform variables, and independent of these, ${(P^{(1)}_{k}})$ and $(P^{(2)}_{k})$ have marginal laws on $\mathcal{P}$ denoted as $\mathbb{P}^{(1)}$ and $\mathbb{ P}^{(2)}.$
For instance, following Bertoin~\cite[Section 4]{BerFrag} a coagulation operation on partitions can be defined by the composition
$$F_{2}(F_{1}(y)):=F_{2}\circ F_{1}(y)=\sum_{k=1}^{\infty}P^{(2)}_{k}\indic_{(F_{1}^{-1}(U_{k})\leq y)}$$ in terms of partitions induced by the relation
\begin{equation}
i\sim j{\mbox { iff }}F_{1}^{-1}\circ F^{-1}_{2}(U'_{i})=F_{1}^{-1}\circ F^{-1}_{2}(U'_{j})
\label{rel1}
\end{equation}
based on $n$ iid Uniform$[0,1]$ variables
$(U'_{1},\ldots,U'_{n}).$ If viewed in stages, one first creates a partition following the law of the $\mathbb{P}^{(2)}$-EPPF associated with $F_{2}$
by the relationship
$$
i\sim j{\mbox { iff }}F^{-1}_{2}(U'_{i})=F^{-1}_{2}(U'_{j}).
$$
Given the partition of $[n],$ say $\{B_{1},\ldots,B_{K^{(2)}_{n}}\},$ induced by this operation with $K^{(2)}_{n}=k$ unique blocks, there are $U^{*}_{1},\ldots,U^{*}_{k}$ distinct iid uniform variables associated with the $k$ blocks with labels $\{1,\ldots,k\}.$ The blocks are further merged by the relation, i.e. merge $B_{i}$ and $B_{j}$ according to,
$$
i\sim j{\mbox { iff }}F_{1}^{-1}(U^{*}_{i})=F_{1}^{-1}(U^{*}_{j})
$$
From Pitman\cite[Section 5, Lemma5.18]{Pit06} the corresponding  Coag operator on $\mathcal{P},$ which includes the Coag operator in (\ref{Pitman}) is defined as follows.
Let $(I^{\mathbb{P}^{(1)}}_{j})$ denote the interval partition of
$\mathbb{P}^{(1)}$ as described in~\citep[p. 111]{Pit06} induced by a $\mathbb{P}^{(1)}$-bridge then for $(P^{(2)}_{i})\sim \mathbb{P}^{(2)},$ it follows that
$$
\mathrm{Rank}\left(\sum_{i=1}^{\infty}P^{(2)}_{i}\indic_{(U_{i}\in
I^{\mathbb{P}^{(1)}}_{j})}, j\ge 1\right)
$$
is equivalent in distribution to the sequence in $\mathcal{P}$ induced by the composition of bridges $F_{2}\circ F_{1}.$ Hence under these specifications, setting $(P^{(2)}_{i})=(p_{i})$, the Coag operator
$(\mathbb{P}^{(1)}-\mathrm{Coag})((p_{i}),\cdot)$
is the distribution of
$$
\mathrm{Rank}\left(\sum_{i=1}^{\infty}p_{i}\indic_{(U_{i}\in
I^{\mathbb{P}^{(1)}}_{j})}, j\ge 1\right).
$$
In the literature it is usually assumed that the sequences $(P^{(1)}_{i}),(P^{(2)}_{i})$ are independent, which would mean that the interval partition $(I^{\mathbb{P}^{(1)}}_{j})$ is independent of $(P^{(2)}_{i}).$ In terms of the relation (\ref{rel1}) this means that the merging of blocks in the second stage only depends on the number of blocks $K^{(2)}_{n}=k$ and is otherwise conducted independently with respect to a $\mathbb{P}^{(2)}$-EPPF. This is case for the operator defined in (\ref{Pitman}). However it is clear,
working with the explicit constructions of $P_{1}$ and $P_{2},$ and using the relation (\ref{rel1}), that
$(\mathbb{P}^{(1)}-\mathrm{Coag})((p_{i}),\cdot),$ coagulation operators induced by possibly dependent sequences $(P^{(1)}_{i}),(P^{(2)}_{i})$ still makes sense except now its distribution is a bit more complicated.

We now show that the relevant coagulation operator for the $\mathbb{P}_{\alpha}(\zeta)$ class is of this form, but despite this extra dependence we will be able to show that its distribution can be described quite clearly.
\subsection{Compositions of $\mathbb{P}_{\alpha}(\zeta)$-bridges and resulting Coag operators}
For $0\leq \alpha<1$ and $0\leq \delta<1,$ let $\tau_{\alpha}$ and $\tau_{\delta}$ denote independent generalized gamma subordinators with laws specified by~(\ref{GGexp}), where in the second case $\alpha$ is replaced by $\delta.$   Then for a common random variable $\zeta,$ define bridges
\begin{equation}
Q_{\delta,\zeta}(y)=\frac{\tau_{\delta}(\zeta y)}{\tau_{\delta}(\zeta)}{\mbox { and }}
Q_{\alpha,\tau_{\delta}(\zeta)}(y)=\frac{\tau_{\alpha}(\tau_{\delta}(\zeta) y)}{\tau_{\alpha}(\tau_{\delta}(\zeta))}
\label{Laplacebridge}
\end{equation}
If $\zeta\overset{d}=\gamma_{\theta/(\delta\alpha)},$ then $\tau_{\delta}(\zeta)\overset{d}=\gamma_{\theta/\alpha}$
and it can be deduced from  Pitman and Yor \cite[Proposition 21, p. 869, and p.877-878]{PY97}, that $Q_{\delta,\zeta}$ is a $\mathrm{PD}(\delta,\theta/\alpha)$-bridge and $Q_{\alpha,\tau_{\delta}(\zeta)}$ is a $\mathrm{PD}(\alpha,\theta)$-bridge. When $\zeta=0,$ the bridges reduce to the case of $\mathbb{PD}(\delta,0)$ and $\mathbb{PD}(\alpha,0)$ We can deduce further from Pitman and Yor \cite[p.877-878]{PY97} that these are the only cases where the bridges $Q_{\alpha,\tau_{\delta}(\zeta)}$ and $Q_{\delta,\zeta}$ are independent. Nonetheless, it is obvious by construction that the composition of these bridges yields
\begin{equation}
Q_{\alpha,\tau_{\delta}(\zeta)}(Q_{\delta,\zeta}(y))\overset{d}=Q_{\alpha\delta,\zeta}(y).
\label{Coagcomp}
\end{equation}
Note that we use the fact that $\tau_{\alpha}(\tau_{\delta}(\zeta))\overset{d}=\tau_{\alpha\delta}(\zeta).$ Hence,
this allows us to write
\begin{eqnarray}
  & \mathbb{P}_{\delta}(\zeta)-\mathrm{Coag} &  \nonumber\\
  \mathbb{P}_{\alpha}(\tau_{\delta}(\zeta))& \SingleRarrow{0.5in} &   \mathbb{P}_{\alpha\delta}(\zeta) \label{PitmanA}
\end{eqnarray}
where an initial description of $\mathbb{P}_{\delta}(\zeta)-\mathrm{Coag}$ is given in the next proposition.
\begin{prop}\label{1}Considering the bridges in~(\ref{Laplacebridge}), let $(I^{\mathbb{P}}_{j},j>1)$ for $\mathbb{P}=\mathbb{P}_{\delta}(\zeta),$ denote the interval partition induced by the $\mathbb{P}_{\delta}(\zeta)$-bridge $Q_{\delta,\zeta}.$ Writing
$$
Q_{\alpha,\tau_{\delta}(\zeta)}(y)=\sum_{k=1}^{\infty}P_{k}\indic_{U_{k}\leq y},
$$
it follows that the marginal distribution of the sequence $(P_{k})\sim \mathbb{P}_{\alpha}(\tau_{\delta}(\zeta)),$ but is not in general independent of the $\mathbb{P}_{\delta}(\zeta)$ interval partition $(I^{\mathbb{P}}_{j},j>1).$
\begin{enumerate}
\item[(i)]However, from (\ref{Coagcomp}) it follows that
\begin{equation}
\mathrm{Rank}\left(\sum_{k=1}^{\infty}P_{k}\indic_{(U_{k}\in
I^{\mathbb{P}}_{j})}, j\ge 1\right)\sim \mathbb{P}_{\alpha\delta}(\zeta)
\label{Prank}
\end{equation}
\item[(ii)]Hence setting $(P_{k})=(p_{k}),$ the $\mathbb{P}_{\delta}(\zeta)-\mathrm{Coag}((p_{k}),\cdot)$ is the distribution on
$\mathcal{P}$ of
\begin{equation}
\mathrm{Rank}\left(\sum_{k=1}^{\infty}p_{k}\indic_{(U_{k}\in
I^{\mathbb{P}}_{j})}, j\ge 1\right)
\label{Pcoag}
\end{equation}
where the conditional distribution of the $\mathbb{P}_{\delta}(\zeta)$ interval partition $(I^{\mathbb{P}}_{j},j>1)$ given $(P_{k})=(p_{k})$ is not independent of $(p_{k}),$ and is otherwise determined by the constructions in~(\ref{Laplacebridge}).
\end{enumerate}
\end{prop}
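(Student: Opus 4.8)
The plan is to derive all three parts from the single subordination identity~(\ref{Coagcomp}), leaning on two facts already in place: the bijection between the law of a $\mathbb{P}$-bridge and the law of its ranked jump sequence (Section~2), and the interval-partition description of coagulation quoted from Pitman~\cite[Lemma~5.18]{Pit06}. For the marginal law of $(P_{k})$ there is essentially nothing to do: the process $Q_{\alpha,\tau_{\delta}(\zeta)}$ of~(\ref{Laplacebridge}) is of exactly the form $Q_{\alpha,\zeta'}$ appearing in~(\ref{Laplacerpm}) with $\zeta'=\tau_{\delta}(\zeta)$ a non-negative random variable independent of the subordinator $\tau_{\alpha}$ (using that $\tau_{\delta},\tau_{\alpha}$ were taken independent), so it is a $\mathbb{P}_{\alpha}(\tau_{\delta}(\zeta))$-bridge and its ranked jumps $(P_{k})$ have law $\mathbb{P}_{\alpha}(\tau_{\delta}(\zeta))$; equivalently, condition on $\tau_{\delta}(\zeta)=t$, observe $(P_{k})\mid\{\tau_{\delta}(\zeta)=t\}\sim\mathbb{P}_{\alpha}(t)$ since then $Q_{\alpha,\tau_{\delta}(\zeta)}=Q_{\alpha,t}$, and integrate over the law of $\tau_{\delta}(\zeta)$, which by the formula for $\eta^{*}$ is precisely the mixture defining $\mathbb{P}_{\alpha}(\tau_{\delta}(\zeta))$.

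For part (i) I would apply the quoted form of Pitman's Lemma~5.18 with $\mathbb{P}^{(1)}=\mathbb{P}_{\delta}(\zeta)$, $F_{1}=Q_{\delta,\zeta}$, $\mathbb{P}^{(2)}=\mathbb{P}_{\alpha}(\tau_{\delta}(\zeta))$, $F_{2}=Q_{\alpha,\tau_{\delta}(\zeta)}$ and $(P^{(2)}_{i})=(P_{k})$. The one point that has to be made carefully --- and the only place I expect any friction --- is that this lemma continues to hold \emph{verbatim} here even though $F_{1}$ and $F_{2}$ are \emph{not} independent: the assertion that $\mathrm{Rank}\bigl(\sum_{i}P^{(2)}_{i}\indic_{(U_{i}\in I^{\mathbb{P}^{(1)}}_{j})}\bigr)$ is equal in distribution to the ranked jumps of $F_{2}\circ F_{1}$ is the pathwise statement that dropping the auxiliary uniforms used to build $F_{2}$ into the interval partition cut out by $F_{1}$ reproduces, block by block, the composed bridge read through the relation~(\ref{rel1}); this bookkeeping uses only that $(U_{i})$ is iid uniform and independent of the pair $(F_{1},F_{2})$, which holds in~(\ref{Laplacebridge}), and never that $F_{1}\perp F_{2}$. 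Granting this, $\mathrm{Rank}\bigl(\sum_{k}P_{k}\indic_{(U_{k}\in I^{\mathbb{P}}_{j})},j\ge1\bigr)$ has the law of the ranked jumps of $Q_{\alpha,\tau_{\delta}(\zeta)}\circ Q_{\delta,\zeta}$, which by~(\ref{Coagcomp}) (and $\tau_{\alpha}(\tau_{\delta}(\zeta))\overset{d}=\tau_{\alpha\delta}(\zeta)$) equals in distribution $Q_{\alpha\delta,\zeta}$, whose ranked jumps have law $\mathbb{P}_{\alpha\delta}(\zeta)$ by~(\ref{Laplacerpm}) with $\alpha$ replaced by $\alpha\delta$; this is~(\ref{Prank}).

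For the non-independence claim I would argue via DIVERSITY. By the identification recalled in Section~2.1, $(P_{k})\sim\mathbb{P}_{\alpha}(\tau_{\delta}(\zeta))$ a.s.\ determines $T^{(\alpha)}:=\lim_{i}(i\Gamma(1-\alpha)P_{i})^{-1/\alpha}$, whose conditional law given $\tau_{\delta}(\zeta)=t$ is that of $\tau_{\alpha}(t)/t^{1/\alpha}$; meanwhile $(I^{\mathbb{P}}_{j},j>1)$ has ranked block lengths equal to the ranked jumps $(\tilde P_{j})$ of $Q_{\delta,\zeta}$, i.e.\ $(\tilde P_{j})\sim\mathbb{P}_{\delta}(\zeta)$, which a.s.\ determines $T^{(\delta)}:=\lim_{j}(j\Gamma(1-\delta)\tilde P_{j})^{-1/\delta}\overset{d}=\tau_{\delta}(\zeta)/\zeta^{1/\delta}$; in particular $(I^{\mathbb{P}}_{j})$ determines $T^{(\delta)}$. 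Were $(P_{k})$ independent of $(I^{\mathbb{P}}_{j})$ we would get $T^{(\alpha)}\perp T^{(\delta)}$, hence (e.g.\ taking $\zeta=b$ a positive constant, so that $T^{(\delta)}$ is a deterministic multiple of $\tau_{\delta}(b)$) the law of $\tau_{\alpha}(u)/u^{1/\alpha}$ would have to be free of $u$, which is false (its mean is $\alpha u^{1-1/\alpha}$); more conceptually, independence of $(P_{k})$ and $(I^{\mathbb{P}}_{j})$ is equivalent --- via the bijections of Section~2, the positions inside the intervals being filled by an independent uniform sequence --- to independence of the bridges $Q_{\alpha,\tau_{\delta}(\zeta)}$ and $Q_{\delta,\zeta}$, which, as deduced just before the proposition from Pitman--Yor~\cite[p.~877--878]{PY97}, fails outside the cases $\zeta=0$ and $\zeta\overset{d}=\gamma_{\theta/(\delta\alpha)}$.

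Finally, part (ii) is then a direct transcription. By the extension of the Coag operator to possibly dependent input sequences set out before the proposition, $\mathbb{P}_{\delta}(\zeta)-\mathrm{Coag}((p_{k}),\cdot)$ is, by definition, the law on $\mathcal{P}$ of $\mathrm{Rank}\bigl(\sum_{k}p_{k}\indic_{(U_{k}\in I^{\mathbb{P}}_{j})},j\ge1\bigr)$ when $(I^{\mathbb{P}}_{j})$ is drawn from the conditional law of the $\mathbb{P}_{\delta}(\zeta)$ interval partition given $(P_{k})=(p_{k})$ induced by the joint construction~(\ref{Laplacebridge}); the non-independence just established is exactly the assertion that this conditional law genuinely depends on $(p_{k})$, and averaging the identity of part~(i) over $(p_{k})\sim\mathbb{P}_{\alpha}(\tau_{\delta}(\zeta))$ checks consistency. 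The sharper description of this conditional law --- reducing it to conditioning on the DIVERSITY determined by $(p_{k})$ through~(\ref{condzeta}) --- is deferred and is not needed for the present statement.
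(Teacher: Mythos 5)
Your proof is correct and follows essentially the route the paper intends: the paper states this proposition without a separate proof, as a direct consequence of the subordination identity~(\ref{Coagcomp}), the pathwise identification of the ranked jumps of $F_{2}\circ F_{1}$ with the coagulated sequence (valid without independence of the bridges), and the Pitman--Yor characterization of when $Q_{\delta,\zeta}$ and $Q_{\alpha,\tau_{\delta}(\zeta)}$ are independent, which is exactly the content you supply. One small wording correction: the uniforms $(U_{k})$ are the atom locations of $F_{2}$ itself, so the pathwise bookkeeping requires that $(U_{k})$ be iid uniform and independent of the pair $\bigl(F_{1},(P_{k})\bigr)$ --- which does hold in the construction~(\ref{Laplacebridge}) --- rather than ``independent of $(F_{1},F_{2})$'' as you wrote.
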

The description of the distribution of the $\mathbb{P}_{\delta}(\zeta)-\mathrm{Coag}$ operator in statement [(ii)] is rather vague. We now provide a much better description. In regards to the bridges defined in~(\ref{Laplacebridge}), set
\begin{equation}
T_{1}=\frac{\tau_{\delta}(\zeta)}{\zeta^{1/\delta}}{\mbox { and }}
T_{2}=\frac{\tau_{\alpha}(\tau_{\delta}(\zeta))}{{[\tau_{\delta}(\zeta)]}^{1/\alpha}}
\label{Laplacebridge2}
\end{equation}
$T^{-\delta}_{1}$ is the $\delta$-DIVERSITY of $\mathbb{P}_{\delta}(\zeta)$ and $T_{2}^{-\alpha}$ is the $\alpha$-DIVERSITY of $\mathbb{P}_{\alpha}(\tau_{\delta}(\zeta)).$ Hence they are completely determined given realizations from the respective $\mathbb{P}_{\delta}(\zeta)$ and $\mathbb{P}_{\alpha}(\tau_{\delta}(\zeta))$ sequences in $\mathcal{P}.$
Now setting $T_{1}=s$ it follows that
\begin{equation}
T_{2}=\frac{\tau_{\alpha}(\zeta^{1/\delta}s)}{{[\zeta^{1/\delta}s]}^{1/\alpha}}{\mbox { and }}
Q_{\alpha,\tau_{\delta}(\zeta)}(y)=\frac{\tau_{\alpha}(\zeta^{1/\delta}sy)}{\tau_{\alpha}(\zeta^{1/\delta}s)}
\label{Laplacebridge3}
\end{equation}
Applying Bayes rule a conditional density of $T_{1}|T_{2}=v,\zeta$ is given by
$$
f_{1}(s|v,\zeta)\propto f_{\delta}(s){\mbox e}^{-v\zeta^{1/(\alpha\delta)}s^{1/\alpha}}
$$
A conditional density of $T_{2}|\zeta,$ is
$$
f_{2}(v|\zeta)=f_{\alpha}(v)\int_{0}^{\infty}{\mbox e}^{-v\zeta^{1/(\alpha\delta)}s^{1/\alpha}}{\mbox e}^{\zeta}f_{\delta}(s)ds
$$
Hence it follows that a conditional density of $T_{1}|T_{2}=v$ is given by
\begin{equation}
\eta^{(v)}(ds)/ds\propto f_{\delta}(s)\mathbb{E}[{\mbox e}^{-v\zeta^{1/(\alpha\delta)}s^{1/\alpha}}{\mbox e}^{\zeta}]
\label{PKmix}
\end{equation}
Now using Pitman and Yor \cite[p.877-878]{PY97}, gives the following result.
\begin{thm}\label{coag1} Consider the setting in Proposition~\ref{1}, with dependent bridges defined by (\ref{Laplacebridge}), and the associated variables $T_{1}$ and $T_{2}$ defined by (\ref{Laplacebridge2}). Then, for the sequence $(P_{k})$ whose marginal follows a $\mathbb{P}_{\alpha}(\tau_{\delta}(\zeta))$ distribution, set $P_{k}=(p^{(v)}_{k})$, where this indicates that the particular realization $(p^{(v)}_{k})$ corresponds to $T_{2}=v.$ Then the distribution of the $\mathbb{P}_{\delta}(\zeta)-\mathrm{Coag}((p^{(v)}_{k}),\cdot)$ given $P_{k}=(p^{(v)}_{k})$ is equivalent to the distribution of
\begin{equation}
\mathrm{Rank}\left(\sum_{k=1}^{\infty}p^{(v)}_{k}\indic_{(U_{k}\in
I^{\mathbb{Q}^{(v)}}_{j})}, j\ge 1\right)
\label{Pcoag}
\end{equation}
where for fixed $(p^{(v)}_{k}),$ $(I^{\mathbb{Q}^{(v)}}_{j}), j\ge 1)$ is equivalent in distribution to a  $\mathbb{Q}^{(v)}$ interval, with
$$
\mathbb{Q}^{(v)}=\mathrm{PK}_{\delta}(\eta^{(v)}):=\int_{0}^{\infty}\mathrm{PD}(\delta|s)\eta^{(v)}(ds).$$
That is, the conditional distribution of the $\mathbb{P}_{\delta}(\zeta)$ interval partition given $(p^{(v)}_{k})$ only depends on $T_{2},$  and equates with the interval partition of a Poisson-Kingman law generated by a $\delta$-stable subordinator with mixing distribution $\eta^{(v)}$ defined in~(\ref{PKmix}). Equivalently the conditional distribution of the marginally $\mathbb{P}_{\delta}(\zeta)$-bridge constructed in~$(\ref{Laplacebridge}),$ given $(p^{(v)}_{k}),$ is equivalent to a $\mathrm{PK}_{\delta}(\eta^{(v)})$-bridge.
\end{thm}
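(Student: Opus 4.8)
The plan is to establish the last, equivalent assertion of the theorem first --- that conditionally on $(P_{k})=(p^{(v)}_{k})$ the marginally $\mathbb{P}_{\delta}(\zeta)$-bridge $Q_{\delta,\zeta}$ of~(\ref{Laplacebridge}) is a $\mathrm{PK}_{\delta}(\eta^{(v)})$-bridge --- and then read off the statements about the interval partition and the $\mathbb{P}_{\delta}(\zeta)-\mathrm{Coag}$ operator. The second step is immediate: the interval partition $(I^{\mathbb{P}}_{j},j\ge 1)$ is a measurable functional of the path of $Q_{\delta,\zeta}$, while the i.i.d.\ uniform variables $(U_{k})$ entering the representation $Q_{\alpha,\tau_{\delta}(\zeta)}(y)=\sum_{k}P_{k}\indic_{(U_{k}\le y)}$ are the jump locations of that exchangeable bridge and hence are independent of the pair $\bigl((P_{k}),\tau_{\delta}\bigr)$, in particular of $(I^{\mathbb{P}}_{j})$; so pushing the bridge identity forward through the map ``bridge $\mapsto$ (interval partition, independent uniforms)'' and inserting the result into the description of the Coag operator from Proposition~\ref{1} gives exactly~(\ref{Pcoag}) with $\mathbb{Q}^{(v)}=\mathrm{PK}_{\delta}(\eta^{(v)})$. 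Everything thus reduces to computing $\mathcal{L}\bigl(Q_{\delta,\zeta}\mid(P_{k})=(p^{(v)}_{k})\bigr)$.

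The core of the proof is a conditional-independence reduction showing that conditioning on the full sequence $(P_{k})$ amounts to conditioning only on its $\alpha$-DIVERSITY, i.e.\ on $T_{2}=v$. Work throughout conditionally on $\zeta$ and on $\sigma:=\tau_{\delta}(\zeta)$. Given $(\zeta,\sigma)$, the bridge $Q_{\delta,\zeta}(y)=\tau_{\delta}(\zeta y)/\sigma$ is a functional of $(\tau_{\delta}(u):0\le u\le\zeta)$ whereas $(P_{k})$ is a functional of $(\tau_{\alpha}(u):0\le u\le\sigma)$; since $\tau_{\alpha}$, $\tau_{\delta}$ and $\zeta$ are mutually independent, these two objects are conditionally independent given $(\zeta,\sigma)$, so $\mathcal{L}(Q_{\delta,\zeta}\mid(P_{k}),\zeta,\sigma)=\mathcal{L}(Q_{\delta,\zeta}\mid\zeta,\sigma)$. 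Because $\tau_{\delta}$ is an exponential tilt of a stable $\delta$-subordinator, conditioning on the endpoint value $\tau_{\delta}(\zeta)=\sigma$ cancels the tilting; combining this with the $1/\delta$-self-similarity of the stable subordinator --- this is precisely the Poisson-Kingman bridge construction of Pitman and Yor~\cite[pp.~877--878]{PY97} and Pitman~\cite{Pit02} --- shows that $\mathcal{L}(Q_{\delta,\zeta}\mid\zeta,\sigma)$ depends on $(\zeta,\sigma)$ only through $T_{1}=\sigma\zeta^{-1/\delta}$ and equals the $\mathrm{PD}(\delta\mid T_{1})$-bridge. Consequently
\[
\mathcal{L}\bigl(Q_{\delta,\zeta}\mid(P_{k})=(p^{(v)}_{k})\bigr)=\int_{0}^{\infty}\bigl[\mathrm{PD}(\delta\mid s)\text{-bridge}\bigr]\,\mathcal{L}\bigl(T_{1}\in ds\mid(P_{k})=(p^{(v)}_{k})\bigr).
\]

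It remains to identify this mixing law as $\eta^{(v)}$. For that I would first show $(P_{k})\perp(\zeta,\sigma)\mid T_{2}$: conditionally on $(\zeta,\sigma,T_{2}=v)$, i.e.\ on $\tau_{\alpha}(\sigma)=v\sigma^{1/\alpha}$, the exponential tilting of $\tau_{\alpha}$ likewise disappears and $(P_{k})$ is distributed as $\mathrm{PD}(\alpha\mid v)$, free of $(\zeta,\sigma)$. Since $T_{1}$ is a function of $(\zeta,\sigma)$ and $T_{2}$ is, by Pitman~\cite[Proposition~13]{Pit02}, a measurable function of $(P_{k})$ (the $\alpha$-DIVERSITY), this gives $\mathcal{L}(T_{1}\mid(P_{k}))=\mathcal{L}(T_{1}\mid(P_{k}),T_{2})=\mathcal{L}(T_{1}\mid T_{2})$, and on $\{T_{2}=v\}$ the Bayes computation already performed to obtain~(\ref{PKmix}) identifies this conditional law with $\eta^{(v)}$. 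Substituting back,
\[
\mathcal{L}\bigl(Q_{\delta,\zeta}\mid(P_{k})=(p^{(v)}_{k})\bigr)=\int_{0}^{\infty}\bigl[\mathrm{PD}(\delta\mid s)\text{-bridge}\bigr]\,\eta^{(v)}(ds),
\]
which by definition is the $\mathrm{PK}_{\delta}(\eta^{(v)})=\mathbb{Q}^{(v)}$-bridge, proving the theorem. The degenerate boundary cases ($\zeta=0$, or $\alpha\delta=0$) follow from the evident specializations of the same identities.

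I expect the main obstacle to be the rigorous justification of the two ``conditioning on the endpoint removes the exponential tilt'' steps --- namely that $\mathcal{L}(Q_{\delta,\zeta}\mid\zeta,\sigma)$ is the $\mathrm{PD}(\delta\mid T_{1})$-bridge, and that $(P_{k})\mid(\zeta,\sigma,T_{2}=v)\sim\mathrm{PD}(\alpha\mid v)$ independently of $(\zeta,\sigma)$. Both amount to a careful disintegration of the generalized gamma subordinator bridge with respect to its random length parameter, which is exactly where one must invoke the fine bridge/Poisson-Kingman identities of Pitman and Yor~\cite[pp.~877--878]{PY97}; granted those, the remainder is the routine conditional-independence and Bayes-rule bookkeeping sketched above.
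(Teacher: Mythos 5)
Your proposal is correct and follows essentially the same route as the paper: both arguments rest on the Pitman--Yor identities (eqs.\ (96)--(97), pp.\ 877--878) showing that, given $\zeta$, $T_{1}$ and $T_{2}$, the two bridges are conditionally independent with $\mathrm{PD}(\delta\mid T_{1})$ and $\mathrm{PD}(\alpha\mid T_{2})$ laws, so that conditioning on $(P_{k})=(p^{(v)}_{k})$ reduces to conditioning on $T_{2}=v$ and mixing $\mathrm{PD}(\delta\mid s)$ over the already-computed law $\eta^{(v)}(ds)$ of $T_{1}\mid T_{2}=v$. Your version merely makes the conditional-independence and tower-property bookkeeping more explicit than the paper's three-sentence proof.
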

\begin{proof}
Noting (\ref{Laplacebridge3}), it follows from Pitman and Yor~\cite[p. 877, see eq. (96) and (97)]{PY97},
that the bridges $Q_{\delta,\zeta},Q_{\alpha,\tau_{\delta}(\zeta)},$ are conditionally independent given $T_{1}=s$ and $T_{2}=v,$ and $\zeta,$  and have $\mathrm{PD}(\delta|s)$ and $\mathrm{PD}(\alpha|v)$ distributions respectively. Hence the conditional distribution of the bridge $Q_{\delta,\zeta}$ given $(P_{k})=(p^{(v)}_{k}),$ equates with the conditional distribution of $Q_{\delta,\zeta}$ given $T_{2}=v.$ Which is obtained by finding the conditional density of $T_{1}|T_{2}=v.$
\end{proof}

In the next result, we show that the construction of the bridges in (\ref{Laplacebridge}), and the results discussed in Pitman and Yor~\cite[p. 877]{PY97}, identifies a coagulation operation expressed in terms of conditionally independent processes.

\begin{thm}Consider the bridges defined by (\ref{Laplacebridge}), and the associated variables $T_{1}$ and $T_{2}$ defined by (\ref{Laplacebridge2}). Then,
\begin{enumerate}
\item[(i)]conditional on $T_{1}=s,$ the bridges  $Q_{\delta,\zeta}$ and $Q_{\alpha,\tau_{\delta}(\zeta)}$ are conditionally independent.
\item[(ii)]In particular, given $T_{1}=s,$ $Q_{\delta,\zeta}$ has the distribution of $\mathrm{PD}(\delta|T_{1}=s)$-bridge not depending on $\zeta.$
\item[(iii)]Conditional on $T_{1}=s$ and $\zeta=b$ $Q_{\alpha,\tau_{\delta}(\zeta)}$ is a $\mathbb{P}_{\alpha}(b^{1/\delta}s)$-bridge. That is to say a generalized gamma bridge.
\item[(iv)]Conditional on $T_{1}=s,$  $Q_{\alpha,\tau_{\delta}(\zeta)}$ is a $\mathbb{P}_{\alpha}(\zeta^{1/\delta})$-bridge. Where the law of $\zeta$ depends conditionally on $T_{1}=s,$ and is specified by $F_{\zeta,\delta}(\cdot|s)$ defined in \ref{condzeta}.
\item[(v)]In reference to the $\mathbb{P}_{\delta}(\zeta)-\mathrm{Coag}((p_{k}),\cdot)$ defined by \ref{Pcoag}, it  follows that conditional on $T_{1}=s, and (P_{k})=(p_{k})$ the distribution of the $\mathbb{P}_{\delta}(\zeta)$ interval partition $(I^{\mathbb{P}}_{j},j>1)$ does not depend on $(P_{k})$ and is equivalent in distribution to a $\mathrm{PD}(\delta|T_{1}=s)$  interval partition. Conditional on $T_{1}=s,$ the sequence $(P_{k})$ follows a generalized gamma law  $\mathbb{P}_{\alpha}(\zeta^{1/\delta}s)$
    \end{enumerate}
\end{thm}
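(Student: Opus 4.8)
The plan is to reduce everything to two facts from Pitman and Yor \cite[p.\ 877--878]{PY97}, already used in the proof of Theorem~\ref{coag1}: first, the nested subordinator identity $\tau_{\alpha}(\tau_{\delta}(\cdot))\overset{d}{=}\tau_{\alpha\delta}(\cdot)$ together with the scaling of the stable subordinator, which yields the representations in~(\ref{Laplacebridge3}); second, the statement that, conditionally on $T_{1}=s$, $T_{2}=v$ and $\zeta$, the bridges $Q_{\delta,\zeta}$ and $Q_{\alpha,\tau_{\delta}(\zeta)}$ are \emph{independent}, with $Q_{\delta,\zeta}$ a $\mathrm{PD}(\delta|s)$-bridge and $Q_{\alpha,\tau_{\delta}(\zeta)}$ a $\mathrm{PD}(\alpha|v)$-bridge. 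The only genuine work is to descend from conditioning on the whole triple $(T_{1},T_{2},\zeta)$ to conditioning on $T_{1}$ alone, keeping track of the residual randomness in $\zeta$ (hence in $T_{2}$) that survives.

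For (i) and (ii): writing $\mathrm{PD}(\delta|s)(g)$ for integration of a bounded measurable $g$ against the $\mathrm{PD}(\delta|s)$-bridge law, the second cited fact gives
\[
\mathbb{E}\big[g(Q_{\delta,\zeta})\,h(Q_{\alpha,\tau_{\delta}(\zeta)})\mid T_{1},T_{2},\zeta\big]=\mathrm{PD}(\delta|T_{1})(g)\,\mathrm{PD}(\alpha|T_{2})(h).
\]
Since $\mathrm{PD}(\delta|T_{1})(g)$ is $\sigma(T_{1})$-measurable, taking $\mathbb{E}[\,\cdot\mid T_{1}]$ and using the tower property gives $\mathbb{E}[g(Q_{\delta,\zeta})h(Q_{\alpha,\tau_{\delta}(\zeta)})\mid T_{1}]=\mathrm{PD}(\delta|T_{1})(g)\,\mathbb{E}[\mathrm{PD}(\alpha|T_{2})(h)\mid T_{1}]$, which is exactly the product of the two conditional expectations given $T_{1}$; this is (i), and reading off the first factor is (ii), the point being that the law $\mathrm{PD}(\delta|s)$ involves no $\zeta$. (I would also record the direct route to (ii): $\tau_{\delta}$ is an exponentially tilted $\delta$-stable subordinator, the tilting factor $e^{-\tau_{\delta}(\zeta)}$ is constant on $\{\tau_{\delta}(\zeta)=t,\ \zeta=b\}$, so the conditional law of the normalised jumps agrees with the stable case, and stable scaling turns the conditioning value $t$ at time $b$ into the value $tb^{-1/\delta}=T_{1}$ at time $1$.)

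For (iii) and (iv): on $\{T_{1}=s,\ \zeta=b\}$ the total $\tau_{\delta}(\zeta)=b^{1/\delta}s$ is a deterministic constant, and by~(\ref{Laplacebridge3}) the map $y\mapsto\tau_{\alpha}(b^{1/\delta}sy)/\tau_{\alpha}(b^{1/\delta}s)$ is a measurable functional of $\tau_{\alpha}$ and of that constant only; since $\tau_{\alpha}$ is independent of $(\tau_{\delta},\zeta)$ and $(T_{1},\zeta)$ is a functional of $(\tau_{\delta},\zeta)$, conditioning on $(T_{1},\zeta)$ leaves $\tau_{\alpha}$ with its original law, so $Q_{\alpha,\tau_{\delta}(\zeta)}\mid\{T_{1}=s,\zeta=b\}$ is precisely the generalized gamma bridge with constant argument $b^{1/\delta}s$, i.e.\ a $\mathbb{P}_{\alpha}(b^{1/\delta}s)$-bridge in the sense of Pitman~\cite[Section 5.2]{Pit02} recalled in the previous section; this is (iii). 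Statement (iv) then follows by averaging (iii) over the conditional law of $\zeta$ given $T_{1}=s$: since $T_{1}=\tau_{\delta}(\zeta)/\zeta^{1/\delta}$ is exactly the defining representation of the $\mathbb{P}_{\delta}(\zeta)$ mixing variable, formula~(\ref{condzeta}) with $\alpha$ replaced by $\delta$ identifies that conditional law as $F_{\zeta,\delta}(dy|s)\propto e^{-(sy^{1/\delta}-y)}F_{\zeta}(dy)$, and hence $Q_{\alpha,\tau_{\delta}(\zeta)}\mid\{T_{1}=s\}$ is the stated mixture of generalized gamma bridges, namely a $\mathbb{P}_{\alpha}(\zeta^{1/\delta}s)$-bridge with $\zeta\sim F_{\zeta,\delta}(\cdot|s)$.

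Finally (v): the sequence $(P_{k})$ is the ranked-jump functional of $Q_{\alpha,\tau_{\delta}(\zeta)}$, so by (i) it is conditionally independent of $Q_{\delta,\zeta}$ given $T_{1}$; therefore the conditional law of the $\mathbb{P}_{\delta}(\zeta)$ interval partition $(I^{\mathbb{P}}_{j})$ given $\{T_{1}=s,(P_{k})=(p_{k})\}$ equals its conditional law given $\{T_{1}=s\}$ alone, which by (ii) is that of a $\mathrm{PD}(\delta|s)$ interval partition and does not involve $(p_{k})$ — and this is what enters the description of $\mathbb{P}_{\delta}(\zeta)-\mathrm{Coag}((p_{k}),\cdot)$ in~(\ref{Pcoag}). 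The remaining assertion that $(P_{k})\mid\{T_{1}=s\}\sim\mathbb{P}_{\alpha}(\zeta^{1/\delta}s)$ is just (iv) transported through the bijection between $\mathbb{P}$-bridge laws and laws on $\mathcal{P}$. The step I expect to be the main obstacle is precisely this passage from $(T_{1},T_{2},\zeta)$-conditioning to $T_{1}$-conditioning: one must verify that $Q_{\alpha,\tau_{\delta}(\zeta)}$ depends on $\tau_{\delta}$ only through the scalar $\tau_{\delta}(\zeta)$ (so that independence of $\tau_{\alpha}$ from the $\tau_{\delta}$-path can be exploited after only partial conditioning), and that the leftover randomness in $\zeta$ given $T_{1}$ is accounted for by an honest mixture rather than silently disappearing.
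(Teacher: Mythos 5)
Your proposal is correct and takes essentially the same route as the paper: both arguments rest on the Pitman--Yor facts (p.~877, eqs.~(96)--(97)) that, given $(T_{1},T_{2},\zeta)$, the two bridges are independent $\mathrm{PD}(\delta|s)$- and $\mathrm{PD}(\alpha|v)$-bridges, combined with the independence of $\tau_{\alpha}$ from $(\tau_{\delta},\zeta)$ and the representation~(\ref{Laplacebridge3}); your tower-property descent from conditioning on the triple to conditioning on $T_{1}$ alone is simply a more explicit rendering of the paper's terse argument for (i)--(ii), and your treatment of (iii)--(v) spells out what the paper asserts ``easily follows.'' Your reading of (iv) as a $\mathbb{P}_{\alpha}(\zeta^{1/\delta}s)$-bridge with $\zeta\sim F_{\zeta,\delta}(\cdot|s)$, consistent with (iii) and (v), correctly resolves an apparent typo in the statement.
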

\begin{proof}Noting (\ref{Laplacebridge3}), it follows that the bridge $Q_{\alpha,\tau_{\delta}(\zeta)}$ can be expressed in terms of some function of the variables $(\tau_{\alpha},T_{1},\zeta)$ where $\tau_{\alpha}$ is independent of the pair $(T_{1},\zeta)$ and also $Q_{\delta,\zeta}.$ From Pitman and Yor~\cite[p. 877, see eq. (96) and (97)]{PY97}, it follows that $Q_{\delta,\zeta}$ conditioned on $T_{1}=s$ is conditionally independent of $\zeta,$ and has the law of a $\mathrm{PD}(\delta|T_{1}=s)$-bridge. These points establish statements [(i)] and [(ii)]. Statements [(iii)] and [(iv)] easily follow from the explicit construction of  $Q_{\alpha,\tau_{\delta}(\zeta)}$ given in  (\ref{Laplacebridge3}). Statement [(v)] follows as a consequence of statements [(i)] to [(iv)].
\end{proof}

The result shows that by conditioning on $T_{1}=s,$ where $T^{-\delta}_{1}$ is the $\delta-$DIVERSITY corresponding to $\mathbb{P}_{\delta}(\zeta),$ that the composition of dependent bridges described in~(\ref{Coagcomp}), can be first expressed in terms of the composition of conditionally independent bridges, all of which depend on a parameter $s.$ Call a bridge a  $\mathbb{P}^{(s)}_{\alpha,\delta}(\zeta)$-bridge if its law is equivalent in distribution to the conditional distribution of $Q_{\alpha,\tau_{\delta}(\zeta)}\circ Q_{\delta,\zeta}$ given $T_{1}=s.$ Then there is the following relation,
\begin{eqnarray}
  & \mathrm{PD}(\delta|s)-\mathrm{Coag} &  \nonumber\\
  \mathbb{P}_{\alpha}(\zeta^{1/\delta}s)& \SingleRarrow{0.5in} &   \mathbb{P}^{(s)}_{\alpha,\delta}(\zeta) \label{PitmanB}
\end{eqnarray}
where relative to (\ref{PitmanB}), for an input $(P_{k})=(p^{(s)}_{k})$ from a $\mathbb{P}_{\alpha}(\zeta^{1/\delta}s)$ sequence in $\mathcal{P}$ the distribution of the Coag operator $\mathrm{PD}(\delta|s)-\mathrm{Coag}((p^{(s)}_{k}),\cdot)$ is equivalent to
the distribution of
$$
\mathrm{Rank}\left(\sum_{i=1}^{\infty}p^{(s)}_{k}\indic_{(U_{k}\in
I^{\mathrm{PD}(\delta|s)}_{j})}, j\ge 1\right).
$$
where now $(I^{\mathrm{PD}(\delta|s)}_{j})$ denotes a $\mathrm{PD}(\delta|s)$ interval partition that is independent of the input sequence $(P_{k})=(p^{(s)}_{k})$ but otherwise they depend on a common parameter $s.$ It follows that the relation (\ref{PitmanA}) arises from (\ref{PitmanB})  by randomizing $s^{-\delta}$  according to the law of the $\delta-$DIVERSITY of $\mathbb{P}_{\delta}(\zeta).$ The diagram in~(\ref{PitmanA}) can hence be expressed in terms of random partitions on $[n]$ as follows.
Step 1. Draw a variable $S$ having the law of the  $\delta-$DIVERSITY of a $\mathbb{P}_{\delta}(\zeta)$ exchangeable partition. Step 2. Setting $S^{-1/\delta}=s,$ form a random partition $\{B_{1},\ldots,B_{K_{n}}\}$  of $[n]$ according to a  $\mathbb{P}_{\alpha}(\zeta^{1/\delta}s)$-EPPF. Step 3. Merge these $K_{n}$ blocks according to an independent $\mathrm{PD}(\delta|s)$-EPPF.
This scheme produces a random partition of $[n]$ according to a  $\mathbb{P}_{\alpha\delta}(\zeta)$-EPPF.
\subsection{Fragmentation}
From Pitman~\cite[p.112]{Pit06}, for an input $(P_{i})=(p_{i})$ a fragmentation operator $\mathbb{P}-\mathrm{Frag}((p_{i}),\cdot)$ is defined as the distribution of
$$
\mathrm{Rank}(p_{i}Q_{i,j},i,j\ge 1).
$$
where $(Q_{i,j})_{j\ge 1}$ has distribution $\mathbb{P}$ for each $i,$ and these sequences are independent as $i$ varies. In other words one splits  the input $(P_{i})$ multiplying each term by an independent sequence of elements in $\mathcal{P}$ having common law $\mathbb{P}.$ In the case of $(\ref{Pitman}),$ the input has a $\mathrm{PD}(\alpha\delta,\theta)$ independent of the $(Q_{i,j})_{j\ge 1}$ having common law $\mathbb{P}=\mathrm{PD}(\alpha,-\alpha\delta).$ In this section we will show that the same  fragmentation operator  $\mathrm{PD}(\alpha,-\alpha\delta)-\mathrm{Frag}((p_{i}),\cdot),$ applied to independent inputs $(P_{i})$ having law $\mathbb{P}_{\alpha\delta}(\zeta)$ gives the coagulation fragmentation duality for the $\mathbb{P}_{\alpha}(\zeta)$ class that generalizes (\ref{Pitman}).
Again we note that, unlike the coagulation operators discussed in the previous section, the input $(P_{i})$ is independent of the $(Q_{i,j})_{j\ge 1},$ which agrees with the formulation in \cite{Pit99}.  Nonetheless the validity of such results is not immediately obvious. In order to do this we first express these fragmentation operations in terms of an equivalent distributional relationship involving bridges. In particular, let $(P^{(k)}_{\alpha,-\alpha\delta})$ denote a collection of independent $\mathrm{PD}(\alpha,-\alpha\delta)-$bridges. Then it is known that the fragmentation results in \cite{Pit99} can be read in terms of the distributional equivalence of bridges, for $y$ in $[0,1],$
$$
P_{\alpha,\theta}(y)\overset{d}=\sum_{k=1}^{\infty}P_{k}P^{(k)}_{\alpha,-\alpha\delta}(y)
$$
where $(P_{k})$ follows a $\mathrm{PD}(\alpha\delta,\theta),$ distribution. The next result extends this to our setting.
\begin{thm}\label{frag1} Let $(P_{k})$ have law $\mathbb{P}_{\alpha\delta}(\zeta)$ chosen independent of a sequence $(P^{(k)}_{\alpha,-\alpha\delta})$ of independent $\mathrm{PD}(\alpha,-\alpha\delta)-$bridges constructed from the collections of independent $\mathrm{PD}(\alpha,-\alpha\delta)$ sequences
$(Q_{k,j})_{j\ge 1}.$ Then,
\begin{enumerate}
\item[(i)]for $Q_{\alpha,\tau_{\delta}(\zeta)}$ a $\mathbb{P}_{\alpha}(\tau_{\delta}(\zeta))$-bridge there is the distributional equivalence
    \begin{equation}
    Q_{\alpha,\tau_{\delta}(\zeta)}(y)\overset{d}=\sum_{k=1}^{\infty}P_{k}P^{(k)}_{\alpha,-\alpha\delta}(y)
    \label{Fragverify}
    \end{equation}
\item[(ii)]Hence, unconditionally, $\mathrm{PD}(\alpha,-\alpha\delta)-\mathrm{Frag}((P_{i}),\cdot)$ has distribution,
$$
\mathrm{Rank}(P_{i}Q_{i,j},i,j\ge 1)\sim \mathbb{P}_{\alpha}(\tau_{\delta}(\zeta)).
$$
\end{enumerate}
\end{thm}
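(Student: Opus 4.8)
The plan is to prove (i) and (ii) together by recognising them as two forms of the single frequency-level identity
\begin{equation}
\mathrm{PD}(\alpha,-\alpha\delta)-\mathrm{Frag}(\mathbb{P}_{\alpha\delta}(\zeta),\cdot)=\mathbb{P}_{\alpha}(\tau_{\delta}(\zeta)).
\label{fragplaneq}
\end{equation}
Writing each bridge as $P^{(k)}_{\alpha,-\alpha\delta}(y)=\sum_{j}Q_{k,j}\indic_{(W_{k,j}\le y)}$ for an auxiliary iid Uniform array $(W_{k,j})$ independent of everything, the process on the right of (\ref{Fragverify}) equals $\sum_{k,j}P_{k}Q_{k,j}\indic_{(W_{k,j}\le y)}$, a bridge whose ranked frequency sequence is precisely $\mathrm{Rank}(P_{i}Q_{i,j})$, i.e.\ a draw from $\mathrm{PD}(\alpha,-\alpha\delta)-\mathrm{Frag}((P_{i}),\cdot)$; and $Q_{\alpha,\tau_{\delta}(\zeta)}$ is a bridge with frequency sequence distributed $\mathbb{P}_{\alpha}(\tau_{\delta}(\zeta))$. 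Since the law of a bridge is in bijection with the law of its ranked frequency sequence, (i) is equivalent to (\ref{fragplaneq}) and (ii) is a restatement of it; so it suffices to prove (\ref{fragplaneq}).

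I would prove (\ref{fragplaneq}) inside the subordinator construction (\ref{Laplacebridge}). Set $\Theta=\tau_{\delta}(\zeta)$, so that the $\mathbb{P}_{\alpha}(\tau_{\delta}(\zeta))$-bridge is $y\mapsto\tau_{\alpha}(\Theta y)/\tau_{\alpha}(\Theta)$ with $\tau_{\alpha}$ independent of $(\tau_{\delta},\zeta)$, and recall the subordination identity $\tau_{\alpha}\circ\tau_{\delta}\overset{d}=\tau_{\alpha\delta}$ already used after (\ref{Coagcomp}). Since $\tau_{\delta}$ has no drift, its open jump intervals $(\tau_{\delta}(t^{-}),\tau_{\delta}(t))$, over the jump times $t\in[0,\zeta]$, exhaust $[0,\Theta]$ up to a Lebesgue-null set, so they partition the jumps of $\tau_{\alpha}$ on $[0,\Theta]$ into families indexed by the jumps $J^{\delta}_{j}$ of $\tau_{\delta}$: family $j$ is the collection of $\tau_{\alpha}$-jumps inside the $j$th $\tau_{\delta}$-interval, with total mass $\widetilde{J}_{j}$, which is exactly the $j$th jump of $\tau_{\alpha}\circ\tau_{\delta}$, and $\sum_{j}\widetilde{J}_{j}=\tau_{\alpha}(\Theta)$. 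Normalising by $\tau_{\alpha}(\Theta)$, the frequencies of $Q_{\alpha,\tau_{\delta}(\zeta)}$ factor, family by family, as (block mass $\widetilde{J}_{j}/\tau_{\alpha}(\Theta)$) times (within-family normalised mass).

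The two factors are then identified. First, $(\widetilde{J}_{j})_{j}$ are the jumps of $\tau_{\alpha}\circ\tau_{\delta}\overset{d}=\tau_{\alpha\delta}$ on $[0,\zeta]$, so $\mathrm{Rank}(\widetilde{J}_{j}/\tau_{\alpha}(\Theta))$ is distributed as the frequency sequence of $\tau_{\alpha\delta}(\zeta y)/\tau_{\alpha\delta}(\zeta)$, that is, $\mathbb{P}_{\alpha\delta}(\zeta)$. Second, conditionally on the path of $\tau_{\delta}$ the families are independent, and family $j$ is the jump set of $\tau_{\alpha}$ over an interval of length $J^{\delta}_{j}$; conditioning further on $\widetilde{J}_{j}$, the generalised-gamma tilting $e^{-x}$ in the L\'evy density $x^{-1-\alpha}e^{-x}$ is absorbed (its product over the family equals $e^{-\widetilde{J}_{j}}$, constant given $\widetilde{J}_{j}$), and the within-family normalised masses given $(J^{\delta}_{j},\widetilde{J}_{j})=(\ell,m)$ are distributed $\mathrm{PD}(\alpha\mid m\ell^{-1/\alpha})$, exactly as for a plain $\alpha$-stable subordinator. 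Integrating $\ell$ out against the conditional law of the $\tau_{\delta}$-jump given $\widetilde{J}_{j}=m$ --- which, by the Poisson-mark structure of the pairs $(J^{\delta}_{j},\widetilde{J}_{j})$, is proportional to $\ell^{-1-\delta}e^{-\ell}f_{\tau_{\alpha}(\ell)}(m)\,d\ell$ --- and substituting $t=m\ell^{-1/\alpha}$ converts this mixing measure into one proportional to $t^{\alpha\delta}f_{\alpha}(t)\,dt$, i.e.\ the polynomially tilted stable law that defines $\mathrm{PD}(\alpha,-\alpha\delta)$, with every trace of $m$ (and of $\zeta$) dropping out. Hence, conditionally on $(\widetilde{J}_{j})_{j}$, the within-family sequences are iid $\mathrm{PD}(\alpha,-\alpha\delta)$ and independent of $(\widetilde{J}_{j})_{j}$; assembling the factors, the frequency sequence of $Q_{\alpha,\tau_{\delta}(\zeta)}$ is distributed as $\mathrm{Rank}(M_{j}R_{j,i})$ with $(M_{j})\sim\mathbb{P}_{\alpha\delta}(\zeta)$ independent of iid $\mathrm{PD}(\alpha,-\alpha\delta)$ sequences $(R_{j,i})_{i}$, which is (\ref{fragplaneq}). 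Taking $\zeta\overset{d}=\gamma_{\theta/(\alpha\delta)}$ recovers Pitman's fragmentation relation in (\ref{Pitman}).

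The step I expect to be the main obstacle is this identification of the within-family refinement: showing it is \emph{exactly} $\mathrm{PD}(\alpha,-\alpha\delta)$ and, decisively, that it \emph{does not depend on the family total} $m$, so that it is independent of the $\mathbb{P}_{\alpha\delta}(\zeta)$ block masses. The computation is short but the cancellations --- the $e^{-x}$ tilting, the power of $\ell$ in the $\tau_{\delta}$-jump density, and the scaling of $f_{\alpha}$ under $t=m\ell^{-1/\alpha}$ --- must be tracked carefully; this is where the analysis of compositions of stable subordinators in Pitman and Yor~\cite[p.~877--878]{PY97} is invoked (or re-derived), and it is also what makes the argument insensitive to the law of $\zeta$: one conditions on a $\sigma$-field carrying $\zeta$ and the DIVERSITY variables $T_{1},T_{2}$ of (\ref{Laplacebridge2}) before the $\alpha$-stable computation, then averages over $\zeta$. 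A subsidiary, purely measure-theoretic point to check carefully is that the jump intervals of the driftless subordinator $\tau_{\delta}$ genuinely induce the asserted grouping of the jumps of $\tau_{\alpha}$ (the range of $\tau_{\delta}$ being Lebesgue-null).
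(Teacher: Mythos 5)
Your proposal is correct, but it proves the theorem by a genuinely different route than the paper. The paper's proof is an analytic one: it fixes $y$, rewrites the right side of (\ref{Fragverify}) as the mean functional $\int_{0}^{1}u\,Q^{(y)}(du)$ of the composed bridge $Q^{(y)}=Q_{\alpha\delta,\zeta}\circ H^{(y)}$, and then verifies the joint identity (\ref{jointcheck}) by matching $-\log$ Laplace transforms conditionally on $\zeta$ and $\tau_{\delta}(\zeta)$; the decisive ingredient is the Markov--Krein/quasi-invariance identity $\mathbb{E}[(1+M)^{\alpha\delta}]=(\mathbb{E}[(1+g(U))^{\alpha}])^{\delta}$ for mean functionals $M$ of $\mathrm{PD}(\alpha,-\alpha\delta)$, quoted from Vershik, Yor and Tsilevich \cite{Vershik}. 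You instead work at the level of the Poisson point processes of jumps: you disintegrate the jumps of $\tau_{\alpha}$ over the jump intervals of the driftless $\tau_{\delta}$, identify the family totals with the jumps of $\tau_{\alpha}\circ\tau_{\delta}\overset{d}=\tau_{\alpha\delta}$ (hence block masses $\sim\mathbb{P}_{\alpha\delta}(\zeta)$), and show by an explicit change of variables that the within-family normalized masses are iid $\mathrm{PD}(\alpha,-\alpha\delta)$ and, crucially, independent of the family totals and of $\zeta$. Your key cancellation is correct: the mark law of the parent $\tau_{\delta}$-jump given a family total $m$ is proportional to $\ell^{-1-\delta}{\mbox e}^{-\ell}\cdot{\mbox e}^{\ell}{\mbox e}^{-m}\ell^{-1/\alpha}f_{\alpha}(m\ell^{-1/\alpha})\,d\ell$, and the substitution $t=m\ell^{-1/\alpha}$ turns this into a measure proportional to $t^{\alpha\delta}f_{\alpha}(t)\,dt$ with the $m$-dependence factoring out, which is exactly the polynomially tilted mixing law defining $\mathrm{PD}(\alpha,-\alpha\delta)$. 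In effect you re-derive the Pitman--Yor subordination structure that the paper only cites; this buys a structural explanation of \emph{why} the fragmenting law is $\mathrm{PD}(\alpha,-\alpha\delta)$ and independent of the input, and it delivers statement (ii) about ranked frequencies directly rather than via the bridge bijection. The paper's route is shorter and avoids the Poisson-marking bookkeeping, at the cost of importing the moment identity from \cite{Vershik} and of an exchangeability reduction to finitely many $y$'s. Either argument stands on its own.
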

\begin{proof}Since the bridges are exchangeable, it suffices to verify (\ref{Fragverify}) for some fixed $y.$ For each fixed $y,$ let $H^{(y)}$ denote the distribution function of the random variable $P_{\alpha,-\alpha\delta}(y).$ Let $Q_{\alpha\delta,\zeta}$ denote a $\mathbb{P}_{\alpha\delta}(\zeta)$-bridge, and define the random probability measure $Q^{(y)}=Q_{\alpha\delta,\zeta}\circ H^{(y)},$ i.e.
$$
Q^{(y)}(u)=\sum_{k=1}^{\infty}P_{k}\indic_{(P^{(k)}_{\alpha,-\alpha\delta}(y)\leq u)}
$$
It follows that for each fixed $y,$ (not path-wise), that
$$
\int_{0}^{1}uQ^{(y)}(du)\overset{d}=\sum_{k=1}^{\infty}P_{k}P^{(k)}_{\alpha,-\alpha\delta}(y).
$$
But $Q^{(y)}(u)\overset{d}=\tau_{\alpha\delta}(\zeta H^{(y)}(u))/\tau_{\alpha\delta}(\zeta H^{(y)}(1)),$ where $H^{(y)}(1)=1.$
Hereafter set $\tau_{\alpha\delta}(\zeta H^{(y)}(u)):=\tau^{(y)}(u)$
Now recalling the construction of  $Q_{\alpha,\tau_{\delta}(\zeta)}(y)$ as in (\ref{Laplacebridge}), it follows that (\ref{Fragverify}) is verified by showing that,
\begin{equation}
(\tau_{\alpha}(\tau_{\delta}(\zeta)),\tau_{\alpha}(\tau_{\delta}(\zeta)
y))\overset{d}=(\tau^{(y)}(1),\int_{0}^{1}u\tau^{(y)}(du))
\label{jointcheck}
\end{equation}
This will be done by establishing the equivalences of their joint Laplace transforms at positive points $(\omega_{1},\omega_{2}).$ Notice that
$$
\omega_{1}\tau_{\alpha}(\tau_{\delta}(\zeta))+\omega_{2}\tau_{\alpha}(\tau_{\delta}(\zeta)
y)=\int_{0}^{1}[\omega_{1}+\omega_{2}\indic_{(u\leq y)}]\tau_{\alpha}(\tau_{\delta}(\zeta)du)
$$
Similarly,
$$
\omega_{1}\tau^{(y)}(1)+\omega_{2}\int_{0}^{1}u\tau^{(y)}(du)=\int_{0}^{1}[\omega_{1}+\omega_{2}u]\tau^{(y)}(du)
$$
Conditioning on $\tau_{\delta}(\zeta),$ it follows using standard results for linear functionals of positive L\'evy processes that the -log joint Laplace transform of the the left hand side of (\ref{jointcheck}) is given by
$$
\tau_{\delta}(\zeta)\mathbb{E}[{(1+\omega_{1}+\omega_{2}\indic_{(U\leq y)})}^{\alpha}-1]
$$
yielding, for fixed $\zeta,$
$$
\zeta[{(y(1+\omega_{1}+\omega_{2})^{\alpha}+(1-y)(1+\omega_{1})^{\alpha})}^{\delta}-1].
$$
Conditional on $\zeta,$ the joint -log Laplace transform of the right hand side of~(\ref{jointcheck}) can be expressed as
$$
\zeta\mathbb{E}[{(1+\omega_{1}+\omega_{2}P_{\alpha,-\alpha\delta}(y))}^{\alpha\delta}-1],
$$
but
$$
\omega_{1}+\omega_{2}P_{\alpha,-\alpha\delta}(y)=\int_{0}^{1}[\omega_{1}+\omega_{2}\indic_{(u\leq y)}]P_{\alpha,-\alpha\delta}(du).
$$
Furthermore, it is known from \cite{Vershik}, that for  $M:=\int_{0}^{1}g(u)P_{\alpha,-\alpha\delta}(du),$ for some positive function $g,$ that
$$
\mathbb{E}[{(1+M)}^{\alpha\delta}]={(\mathbb{E}[(1+g(U))^{\alpha}])}^{\delta}.
$$
Setting $g(u)=\omega_{1}+\omega_{2}\indic_{(u\leq y)}$, it follows that
$$
\mathbb{E}[{(1+\omega_{1}+\omega_{2}P_{\alpha,-\alpha\delta}(y))}^{\alpha\delta}]={(y(1+\omega_{1}+\omega_{2})^{\alpha}+(1-y)(1+\omega_{1})^{\alpha})}^{\delta}
$$
concluding the result.
\end{proof}
\subsection{Duality} We can now describe the duality relation in terms of the following diagram
\begin{eqnarray}
  & \mathbb{P}_{\delta}(\zeta)-\mathrm{Coag} &  \nonumber\\
  \mathbb{P}_{\alpha}(\tau_{\delta}(\zeta))& \DoubleRLarrow{0.5in} &   \mathbb{P}_{\alpha\delta}(\zeta) \label{CoagFragI}\\
  & \mathrm{PD}(\alpha,-\alpha\delta)-\mathrm{Frag} &\nonumber
\end{eqnarray}
It follows that for $\theta\ge 0,$ (\ref{Pitman}) arises by choosing $\zeta\overset{d}=\gamma_{\theta/(\alpha\delta)}.$
We close with a formal statement.

\begin{thm}Suppose that $X$ and $Y$ are sequences in ${\mathcal P}.$
Then, using the descriptions in Theorems~\ref{coag1} and~\ref{frag1}, the following statements are equivalent.
\begin{enumerate}
\item[(i)]$X\sim \mathbb{P}_{\alpha\delta}(\zeta)$ and
conditional on $X,$
$Y\overset{d}=\mathrm{PD}(\alpha,-\alpha\delta)-\mathrm{Frag}(X,\cdot).$
\item[(ii)]$Y\sim \mathbb{P}_{\alpha}(\tau_{\delta}(\zeta))$ and conditional on $Y,$
$X\sim
 \mathbb{P}_{\delta}(\zeta)-\mathrm{Coag}(Y,\cdot).$

Where in particular, for $Y=(p^{(v)}_{k}),$ indicating its $\alpha-$DIVERSITY or local time has the value $v^{-\alpha},$
$$
\mathbb{P}_{\delta}(\zeta)-(\mathrm{Coag}((p^{(v)}_{k}),\cdot)
\overset{d}=\mathrm{PK}_{\delta}(\eta^{(v)})-(\mathrm{Coag}((p^{(v)}_{k}),\cdot).
$$
Where on the right hand side the $\mathrm{PK}_{\delta}(\eta^{(v)})$ sequence and the input sequence are conditionally independent.
\end{enumerate}
\end{thm}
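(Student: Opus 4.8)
The plan is to produce a single joint law $\mathbb{Q}$ of the pair $(X,Y)$ on $\mathcal{P}\times\mathcal{P}$ and to verify that $\mathbb{Q}$ simultaneously realizes the description in (i) and the description in (ii). As recalled in Section~2, the law of a sequence in $\mathcal{P}$, the law of its bridge, and its EPPF are all in bijection, and a joint law on $\mathcal{P}\times\mathcal{P}$ is determined by either of its marginals together with the corresponding regular conditional. Hence, once such a $\mathbb{Q}$ is exhibited, statement (i) (the $\mathbb{P}_{\alpha\delta}(\zeta)$ marginal of $X$ together with the $\mathrm{PD}(\alpha,-\alpha\delta)-\mathrm{Frag}$ conditional of $Y\mid X$) pins $\mathbb{Q}$ down uniquely, and so does statement (ii) (the $\mathbb{P}_{\alpha}(\tau_{\delta}(\zeta))$ marginal of $Y$ together with the $\mathbb{P}_{\delta}(\zeta)-\mathrm{Coag}$ conditional of $X\mid Y$); the equivalence of (i) and (ii) then follows at once.

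For the construction I would take $\mathbb{Q}$ to be the law of $(X,Y)$ built from the dependent bridges of~(\ref{Laplacebridge}): with a common $\zeta$ and independent generalized gamma subordinators $\tau_{\delta},\tau_{\alpha}$ of exponent~(\ref{GGexp}), let $Y$ be the ranked frequencies of $Q_{\alpha,\tau_{\delta}(\zeta)}$ and $X$ the ranked frequencies of the composition $Q_{\alpha,\tau_{\delta}(\zeta)}\circ Q_{\delta,\zeta}$. By~(\ref{Coagcomp}) the composed bridge is $Q_{\alpha\delta,\zeta}$, so $X\sim\mathbb{P}_{\alpha\delta}(\zeta)$, while by construction $Y\sim\mathbb{P}_{\alpha}(\tau_{\delta}(\zeta))$. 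To see that $\mathbb{Q}$ satisfies (ii), I would invoke the correspondence between composition of bridges and coagulation, Pitman~\cite[Lemma~5.18]{Pit06} (recalled just before Proposition~\ref{1}; it does not require independence of the two bridges): the regular conditional of $X$ given $Y=(p_{k})$ is the law of $\mathrm{Rank}\bigl(\sum_{k}p_{k}\indic_{(U_{k}\in I^{\mathbb{P}_{\delta}(\zeta)}_{j})},\,j\ge 1\bigr)$, where $I^{\mathbb{P}_{\delta}(\zeta)}$ is the interval partition of $Q_{\delta,\zeta}$ conditioned on $Y=(p_{k})$, i.e. it is the $\mathbb{P}_{\delta}(\zeta)-\mathrm{Coag}(Y,\cdot)$; Theorem~\ref{coag1} then identifies this conditional interval partition, for $Y=(p^{(v)}_{k})$, as a $\mathrm{PK}_{\delta}(\eta^{(v)})$-interval partition that is conditionally independent of the input given $v$, which is exactly the refined form of (ii).

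It remains to check that the same $\mathbb{Q}$ satisfies (i). Since its $X$-marginal is already $\mathbb{P}_{\alpha\delta}(\zeta)$, the point is to show that under $\mathbb{Q}$ the regular conditional of $Y$ given $X$ is $\mathrm{PD}(\alpha,-\alpha\delta)-\mathrm{Frag}(X,\cdot)$. This is what Theorem~\ref{frag1} is designed to supply: its proof introduces the coupling $Q^{(y)}=Q_{\alpha\delta,\zeta}\circ H^{(y)}$, in which the coarse bridge $Q_{\alpha\delta,\zeta}$ carrying $X$ is composed with the deterministic quantile map $H^{(y)}$ of $P_{\alpha,-\alpha\delta}(y)$, and then verifies~(\ref{jointcheck})/(\ref{Fragverify}) by matching joint Laplace transforms, using the identity of~\cite{Vershik}. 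Reading~(\ref{Fragverify}) jointly over finite families of $y$ — which the exchangeability of the bridges permits — I would conclude that the joint law of (coarse-block frequencies, fine-block frequencies) produced by $Q_{\alpha,\tau_{\delta}(\zeta)}\circ Q_{\delta,\zeta}$ coincides with the joint law of (input, fragmented output) produced by splitting an independent $\mathbb{P}_{\alpha\delta}(\zeta)$ sequence through independent $\mathrm{PD}(\alpha,-\alpha\delta)$ sequences; hence $Y\mid X$ under $\mathbb{Q}$ is $\mathrm{PD}(\alpha,-\alpha\delta)-\mathrm{Frag}(X,\cdot)$, and $\mathbb{Q}$ satisfies (i).

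The hard part is these last two identifications, not the bookkeeping around them. Unlike the situations treated in~\cite{Pit99,BertoinGoldschmidt2004,Dong2006}, the bridges $Q_{\delta,\zeta}$ and $Q_{\alpha,\tau_{\delta}(\zeta)}$ are not independent — they share $\zeta$, and $\tau_{\delta}(\zeta)$ re-enters the time change — so the classical duality between compositions of independent bridges and Coag/Frag cannot be quoted directly. On the coagulation side the crux is Theorem~\ref{coag1}'s assertion that conditioning on the whole $\mathbb{P}_{\alpha}(\tau_{\delta}(\zeta))$ input sequence is no more informative than conditioning on its $\alpha$-diversity $v^{-\alpha}$, which is precisely what makes the companion bridge a genuine $\mathrm{PK}_{\delta}(\eta^{(v)})$-bridge and the resulting operator a coagulation with a conditionally independent input; on the fragmentation side the crux is upgrading the bridge identity~(\ref{Fragverify}) to the joint law of the nested pair of frequencies. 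With both in hand, (i)$\Leftrightarrow$(ii) is immediate from the bijection between a joint law and any (marginal, regular conditional) pair.
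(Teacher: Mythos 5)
Your overall architecture --- exhibit one joint law $\mathbb{Q}$ of $(X,Y)$ via the composition coupling of (\ref{Laplacebridge}) and check that it simultaneously realizes the (marginal of $X$, Frag conditional) description in (i) and the (marginal of $Y$, Coag conditional) description in (ii) --- is exactly what the paper intends, and the coagulation half is essentially definitional: the $\mathbb{P}_{\delta}(\zeta)-\mathrm{Coag}$ operator is \emph{defined} (Proposition~\ref{1}, part (ii)) as the regular conditional of the composed frequencies given the outer frequencies under this very coupling, and Theorem~\ref{coag1} refines it to the $\mathrm{PK}_{\delta}(\eta^{(v)})$ form you quote.

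The gap is in the fragmentation half. You need that, under $\mathbb{Q}$, the conditional law of $Y$ given $X$ is $\mathrm{PD}(\alpha,-\alpha\delta)-\mathrm{Frag}(X,\cdot)$, i.e.\ that the \emph{joint} law of (coarse frequencies, fine frequencies) under the composition coupling equals the joint law of (input, output) under the fragmentation coupling. You propose to obtain this by ``reading (\ref{Fragverify}) jointly over finite families of $y$,'' but (\ref{Fragverify}) is an identity between the laws of two single bridges: varying $y$ only determines the law of the output bridge $y\mapsto\sum_k P_k P^{(k)}_{\alpha,-\alpha\delta}(y)$, hence the marginal law of $Y$, and never recovers the grouping of the fine jumps into coarse blocks --- the jump locations are iid uniforms carrying no grouping information, so even path-wise equality of the bridges would not determine the pair $(X,Y)$. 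Moreover the coupling used in the paper's proof of Theorem~\ref{frag1}, $Q^{(y)}=Q_{\alpha\delta,\zeta}\circ H^{(y)}$, is explicitly valid ``for each fixed $y$, not path-wise,'' so it cannot be read consistently across two values of $y$. Since two joint laws with identical marginals need not coincide, matching the $X$- and $Y$-marginals separately (which is all that Theorems~\ref{coag1} and~\ref{frag1} literally deliver) does not close the argument. What is missing is a genuinely bivariate identity: for instance equality of the EPPFs of the nested pair of partitions, or of the joint Laplace functionals of $\bigl(Q_{\alpha,\tau_{\delta}(\zeta)}\circ Q_{\delta,\zeta}(x),\,Q_{\alpha,\tau_{\delta}(\zeta)}(y)\bigr)$ and $\bigl(Q_{\alpha\delta,\zeta}(x),\,\sum_k P_kP^{(k)}_{\alpha,-\alpha\delta}(y)\bigr)$, or a disintegration over $(T_{1},T_{2},\zeta)$ reducing to the conditioned Poisson--Dirichlet duality of \cite{Pit99}. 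The paper states the theorem without writing out this step, so you have correctly located where the content lies, but as written your proposal does not yet prove the equivalence.
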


\section{DGM type coagulation fragmentation duality for the $\mathbb{P}_{\alpha}(\zeta)$ class}
We now proceed to establish generalizations of the coagulation fragmentation duality, (\ref{DGMd}), described in Bertoin and Goldschmidt~\cite{BertoinGoldschmidt2004}, Dong, Goldschmidt, and
Martin~\cite{Dong2006}. In order for us to identify the appropriate generalization of this duality, we first look at the fragmentation operation.
\subsection{Fragmentation}
The fragmentation operator $\mathrm{Frag}-\mathrm{PD}(\alpha,1-\alpha)$ can be defined generically as follows. For an input sequence  $(P_{i}),$ splitting of this sequence is achieved by attaching an independent $\mathrm{PD}(\alpha,1-\alpha)$ sequence, say $(Q_{i}),$ to the sized biased pick of $(P_{i}),$ say $\tilde{P}_{1}$ and then ranking the modified sequence. Hence for  the fixed input $(P_{i})=(p_{i}),$ let $(p^{*}_{k})$ denote the sequence remaining after the size biased pick $\tilde{p}_{1}$ is removed from $(p_{i}),$ then $\mathrm{Frag}-\mathrm{PD}(\alpha,1-\alpha)((p_{i}),\cdot)$ has the distribution equivalent to
\begin{equation}
\mathrm{Rank}(\tilde{p}_{1}(Q_{i}), (p^{*}_{k})).
\label{FragDGM}
\end{equation}
In terms of (\ref{DGMd}), the input follows a $\mathrm{PD}(\alpha,\theta)$ distribution and hence it is known that the distribution of the size biased pick $\tilde{P}_{1}\overset{d}=\beta_{1-\alpha,\theta+\alpha}.$ This can be expressed in terms of the following distributional equivalence that can be found in \cite{PPY92,Pit96,PY97}, see also \cite{JamesLamperti} for more details and references,
\begin{equation}
P_{\alpha,\theta}(y)\overset{d}=\beta_{\theta+\alpha,1-\alpha}P_{\alpha,\theta+\alpha}(y)+ (1-\beta_{\theta+\alpha,1-\alpha})\indic_{(U_{1}\leq y)},
\label{Stoch1}
\end{equation}
where the variables on the right hand side are independent. In this case, the distributional result for the fragmentation operation can be verified by the following result
\begin{equation}
P_{\alpha,1+\theta}(y)\overset{d}=\beta_{\theta+\alpha,1-\alpha}P_{\alpha,\theta+\alpha}(y)+ (1-\beta_{\theta+\alpha,1-\alpha})P_{\alpha,1-\alpha}(y),
\label{Stoch2}
\end{equation}
Similar to the case of the Pitman's $\mathrm{PD}(\alpha,-\alpha\delta)$ fragmentation operator that we applied to the  $\mathbb{P}_{\alpha}(\zeta)$ class in the previous section, we will show that the $\mathrm{Frag}-\mathrm{PD}(\alpha,1-\alpha)$ operator is natural to use in this present setting. In order to identify the appropriate distributional relations we will need to establish generalizations of equations ~(\ref{Stoch1}) and~(\ref{Stoch2}).
\begin{thm}Let $Q_{\alpha,\zeta}$ denote a $\mathbb{P}_{\alpha}(\zeta)$-bridge.
\begin{enumerate}
\item[(i)]Then,
\begin{equation}
Q_{\alpha,\zeta}(y)\overset{d}=\frac{\tau_{\alpha}(\zeta
y)}{\tau_{\alpha}(\zeta)}\overset{d}=\frac{\tau_{\alpha}((\gamma_{1}+\zeta)
y)+\gamma_{1-\alpha}\indic_{(U_{1}\leq
y)}}{\tau_{\alpha}(\gamma_{1}+\zeta)+\gamma_{1-\alpha}}
\label{idCoag}
\end{equation}
This can be written as,
$$
Q_{\alpha,\zeta}(y)\overset{d}=(1-\tilde{P}_{1})Q_{\alpha,\gamma_{1}+\zeta}(p)+\tilde{P}_{1}\indic_{(U_{1}\leq
y)}
$$
where the size biased pick from  fron a $\mathbb{P}_{\alpha}(\zeta)$ sequence can be represented as $\tilde{P}_{1}=\gamma_{1-\alpha}/(\tau_{\alpha}(\gamma_{1}+\zeta)+\gamma_{1-\alpha})$, and $Q_{\alpha,\gamma_{1}+\zeta}(y)=
\tau_{\alpha}((\gamma_{1}+\zeta)y)/\tau_{\alpha}(\gamma_{1}+\zeta)$ is a $\mathbb{P}_{\alpha}(\gamma_{1}+\zeta)$-bridge generally not independent of $\tilde{P}_{1}.$
\item[(ii)]Now replacing $\indic_{(U_{1}\leq
y)}$ by an independent $\mathrm{PD}(\alpha,1-\alpha)$-bridge it follows that a $\mathbb{P}_{\alpha}(\gamma_{1/\alpha}+\zeta)-$ bridge can be represented as
\begin{equation}
Q_{\alpha,\gamma_{1/\alpha}+\zeta}(y)\overset{d}=(1-\tilde{P}_{1})Q_{\alpha,\gamma_{1}+\zeta}(y)+\tilde{P}_{1}P_{\alpha,1-\alpha}(y).
\label{fragverify1}
\end{equation}
\item[(iii)]Furthermore, in terms of its marginal distribution, the size-biased pick $\tilde{P}_{1},$ which has distribution equivalent to the structural distribution of a $\mathbb{P}_{\alpha}(\zeta)$ sequence, can be represented as
\begin{equation}
\widetilde{P}_{1}\overset{d}=\beta_{1-\alpha,\alpha}
\frac{\gamma_{1}}{\gamma_{1}+\tau_{\alpha}(\zeta)}\overset{d}=\beta_{1-\alpha,\alpha}\left[1-\frac{\zeta^{1/\alpha}}{{(\zeta+\gamma_{1})}^{1/\alpha}}\right]
\label{structure}
\end{equation}
where the variables appearing on the right hand side are independent.
\end{enumerate}
\end{thm}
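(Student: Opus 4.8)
The plan is to prove (i) as the substantive step and then obtain (ii) and (iii) from it by subordinator manipulations, beta--gamma algebra, and a short density computation.

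\emph{Part (i).} I would condition on $\zeta$ and read the claim as the subordinator-level form of the classical ``split off the size-biased pick'' decomposition of a generalized gamma (equivalently, after conditioning, $\alpha$-stable) bridge --- precisely the content of Pitman and Yor~\cite[p.~877--878, eqns (96)--(97)]{PY97}, already invoked in this paper. Given the $\alpha$-diversity, i.e.\ given $T=\tau_{\alpha}(\zeta)/\zeta^{1/\alpha}$, the bridge $Q_{\alpha,\zeta}$ is a $\mathrm{PD}(\alpha\,|\,\cdot)$-bridge, to which the stable version of the size-biased splitting applies; re-randomising $T$ according to the $\mathbb{P}_{\alpha}(\zeta)$ law (conditional density $f_{\alpha}(s)\mathrm{e}^{-(s\zeta^{1/\alpha}-\zeta)}$) and passing to the generalized gamma representation, the size-biased pick becomes a jump of $\tau_{\alpha}$ on $[0,\zeta]$, whose Palm/Mecke ``shape'' is $x\,\rho_{\alpha}(dx)\propto x^{-\alpha}\mathrm{e}^{-x}\,dx$, a $\gamma_{1-\alpha}$ law, while deleting one jump of a L\'evy process leaves an independent copy of it. The two gamma parameters are pinned down by $\mathbb{E}[\mathrm{e}^{-\omega\tau_{\alpha}(\gamma_{1})}]=(1+\omega)^{-\alpha}$ and $\mathbb{E}[\mathrm{e}^{-\omega\gamma_{1-\alpha}}]=(1+\omega)^{-(1-\alpha)}$, whose product $(1+\omega)^{-1}$ records the identity $\tau_{\alpha}(\gamma_{1})+\gamma_{1-\alpha}\overset{d}{=}\gamma_{1}$ and is what forces the remainder's time parameter to be $\gamma_{1}+\zeta$. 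The dependence of $\tilde P_{1}$ on $Q_{\alpha,\gamma_{1}+\zeta}$ is genuine but enters only through the shared normaliser $D=\tau_{\alpha}(\gamma_{1}+\zeta)+\gamma_{1-\alpha}$: the \emph{unnormalised} objects $\gamma_{1-\alpha}$ and the jump configuration of $\tau_{\alpha}$ on $[0,\gamma_{1}+\zeta]$ are independent. Carrying out this reduction while invoking \cite{PY97} at exactly the right place is, I expect, the main obstacle; the rest is routine. (As a sanity check, $\zeta=0$ recovers (\ref{Stoch1}) with $\theta=0$.)

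\emph{Part (ii).} Starting from the representation in (i), I would replace the single atom $\indic_{(U_{1}\le y)}$ by an independent $\mathrm{PD}(\alpha,1-\alpha)$-bridge and track the ranked masses. By Pitman--Yor's Proposition~21, $\mathrm{PD}(\alpha,1-\alpha)=\mathbb{P}_{\alpha}(\gamma_{(1-\alpha)/\alpha})$, so this bridge is the normalised jump configuration of an independent generalized gamma subordinator run for time $\gamma_{(1-\alpha)/\alpha}$; since $\mathbb{E}[\mathrm{e}^{-\omega\tau_{\alpha}(\gamma_{(1-\alpha)/\alpha})}]=(1+\omega)^{-(1-\alpha)}$, the total of the inserted configuration matches in law the removed atom $\gamma_{1-\alpha}$. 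Using in addition that a (normalised) $\mathrm{PD}(\alpha,1-\alpha)$ sequence is independent of its associated total --- the generalized-gamma analogue of the gamma--Dirichlet independence, which follows because conditioning on that total leaves the conditional law of the diversity $\propto t^{-(1-\alpha)}f_{\alpha}(t)$ unchanged --- the fragmented configuration is, unnormalised, the disjoint union of the jumps of $\tau_{\alpha}$ on $[0,\gamma_{1}+\zeta]$ and the jumps of an independent generalized gamma on $[0,\gamma_{(1-\alpha)/\alpha}]$, hence the jumps of one generalized gamma on $[0,(\gamma_{1}+\zeta)+\gamma_{(1-\alpha)/\alpha}]$. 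As $\gamma_{1}+\gamma_{(1-\alpha)/\alpha}\overset{d}{=}\gamma_{1/\alpha}$, this is a $\mathbb{P}_{\alpha}(\gamma_{1/\alpha}+\zeta)$-bridge, which is~(\ref{fragverify1}).

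\emph{Part (iii).} From the coupled form $\tilde P_{1}=\gamma_{1-\alpha}/(\tau_{\alpha}(\gamma_{1}+\zeta)+\gamma_{1-\alpha})$ of (i), write $\tau_{\alpha}(\gamma_{1}+\zeta)=\tau_{\alpha}(\zeta)+R$ with $R$ the increment over $[\zeta,\zeta+\gamma_{1}]$, so $R\overset{d}{=}\tau_{\alpha}(\gamma_{1})\overset{d}{=}\gamma_{\alpha}$ and $R$ is independent of $\tau_{\alpha}(\zeta)$, of $\zeta$, and of $\gamma_{1-\alpha}$. Then $R+\gamma_{1-\alpha}\overset{d}{=}\gamma_{1}$, and beta--gamma algebra makes $G:=R+\gamma_{1-\alpha}$ and $\beta:=\gamma_{1-\alpha}/(R+\gamma_{1-\alpha})$ independent with $G\overset{d}{=}\gamma_{1}$ and $\beta\overset{d}{=}\beta_{1-\alpha,\alpha}$, jointly independent of $(\tau_{\alpha}(\zeta),\zeta)$; since $\gamma_{1-\alpha}=\beta G$ this gives $\tilde P_{1}\overset{d}{=}\beta_{1-\alpha,\alpha}\,\gamma_{1}/(\gamma_{1}+\tau_{\alpha}(\zeta))$ with independent factors, the first equality. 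For the second it suffices to check, conditionally on $\zeta=b$, that $\tau_{\alpha}(b)/(\gamma_{1}+\tau_{\alpha}(b))\overset{d}{=}(b/(b+\gamma_{1}))^{1/\alpha}$; this is a one-line density computation using that $\tau_{\alpha}(b)$ is the exponential tilting of the stable variable $\sigma_{\alpha}(b)$ together with $\mathbb{E}[\sigma_{\alpha}(b)\mathrm{e}^{-\lambda\sigma_{\alpha}(b)}]=b\alpha\lambda^{\alpha-1}\mathrm{e}^{-b\lambda^{\alpha}}$, both sides having density $\mathrm{e}^{b}b\alpha\,r^{-1-\alpha}\mathrm{e}^{-br^{-\alpha}}$ on $(0,1)$.
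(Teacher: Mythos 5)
Parts (ii) and (iii) are sound granted (i): your beta--gamma derivation of (\ref{structure}) is actually more explicit than the paper's one-line appeal to ``standard beta-gamma algebra'', and your (ii) is the paper's argument (substitute $\gamma_{1-\alpha}\overset{d}=\tau_{\alpha}(\gamma_{(1-\alpha)/\alpha})$ jointly with the bridge and use $\gamma_{1}+\gamma_{(1-\alpha)/\alpha}\overset{d}=\gamma_{1/\alpha}$); the independence of the total $\tau_{\alpha}(\gamma_{(1-\alpha)/\alpha})$ from the normalized $\mathrm{PD}(\alpha,1-\alpha)$ bridge that both you and the paper invoke is a true, special feature of gamma mixing, though your stated reason for it is garbled. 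The real problem is part (i), which you yourself flag as ``the main obstacle'' and then do not carry out. The Palm/Mecke heuristic you offer is not correct as stated: for a \emph{size-biased} deletion the joint law of the picked jump $x$ and the remaining configuration $N$ carries the weight $x/(x+T(N))$ relative to the product of $x\rho(dx)$ with the law of the original Poisson configuration. Hence the unnormalised pick does not have the $\gamma_{1-\alpha}$ law (conditionally on $\zeta$ its density is proportional to $x^{-\alpha}\mathrm{e}^{-x}\,\mathbb{E}\bigl[(x+\tau_{\alpha}(\zeta))^{-1}\bigr]$), and the remainder is neither an independent copy of the process on $[0,\zeta]$ nor, a fortiori, a process on the enlarged horizon $[0,\gamma_{1}+\zeta]$. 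The entire content of (\ref{idCoag}) is that this tilted, coupled pair coincides, \emph{after normalization}, with the genuinely independent pair $\bigl(\gamma_{1-\alpha},\ \tau_{\alpha}\ \text{on}\ [0,\gamma_{1}+\zeta]\bigr)$; your observation that $(1+\omega)^{-\alpha}(1+\omega)^{-(1-\alpha)}=(1+\omega)^{-1}$ ``forces'' the time parameter to be $\gamma_{1}+\zeta$ is bookkeeping consistent with the answer, not a derivation of it.

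The paper closes exactly this gap by a different device: sample one observation $X_{1}$ from the bridge, read off the posterior law of $Q_{\alpha,\zeta}$ given $(X_{1},\zeta)$ from James--Lijoi--Pr\"unster's posterior analysis of normalized completely random measures --- which produces the tilted subordinator $\tau_{\alpha}(\zeta(1+\lambda)^{\alpha}\,\cdot)$ plus an independent atom $\gamma_{1-\alpha}\delta_{X_{1}}$ with a latent variable $\lambda$ --- and then use the disintegration identity that the prior law of $Q_{\alpha,\zeta}$ is the $X_{1}$-mixture of its posteriors. The one substantive computation is the joint law of $(\lambda,\zeta)$, from which $\lambda\overset{d}=\gamma_{1}/\tau_{\alpha}(\zeta)$ and $\zeta(1+\lambda)^{\alpha}\overset{d}=\gamma_{1}+\zeta$; that is precisely the step your sketch leaves open. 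Your conditioning-on-$T$ route via Perman--Pitman--Yor could in principle be pushed through instead, but it requires writing the joint density of the size-biased pick and $T$ under the tilting $f_{\alpha}(t)\mathrm{e}^{-(t\zeta^{1/\alpha}-\zeta)}$ and matching the normalized pair to the law built from independent $\gamma_{1-\alpha}$, $\gamma_{1}$ and $\tau_{\alpha}$; as written, that verification is missing.
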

\begin{proof}Similar to \cite{Pit96} for (\ref{Stoch1}), statement [(i)] can be established by a Bayesian argument.
Let $X_{1}$ denote a variable so that conditional on $Q_{\alpha,\zeta}$ its distribution is $Q_{\alpha,\zeta}.$ Then noting that conditional on $\zeta,$ $Q_{\alpha,\zeta}$ is a generalized gamma bridge, it follows that the posterior distribution of $Q_{\alpha,\zeta}$ given $X_{1},\zeta$ can be read from James, Lijoi and Pr\"unster~\cite[Proposition 1, Theorems 1 and 2]{JLP2}. By scaling arguments, involving properties of the $\tau_{\alpha}$ subordinator, it follows that the unconditional distribution of $Q_{\alpha,\zeta}$ can be represented as,
$$
Q_{\alpha,\zeta}(y)=\frac{\tau_{\alpha}(\zeta
y)}{\tau_{\alpha}(\zeta)}\overset{d}=\frac{\tau_{\alpha}(\zeta{(1+\lambda)}^{\alpha}
y)+\gamma_{1-\alpha}\indic_{(U_{1}\leq
y)}}{\tau_{\alpha}(\zeta{(1+\lambda)}^{\alpha})+\gamma_{1-\alpha}}
$$
where $\lambda$ is a variable, appearing in James, Lijoi and Pr\"unster~\cite[Proposition 1]{JLP2} for $n=1,$ equal in distribution to $\gamma_{1}/\tau_{\alpha}(\zeta).$ From this, it is not difficult to see that the distribution of $(\lambda,\zeta)$ is given proportional to
$$
F_{\zeta}(dx)x{\mbox e}^{-x[(1+\lambda)^{\alpha}-1]}{(1+\lambda)}^{\alpha-1}
$$
Manipulating this distribution easily shows that
$$
\lambda\overset{d}=\zeta^{-1/\alpha}{(\gamma_{1}+\zeta)}^{1/\alpha}-1\overset{d}=\frac{\gamma_{1}}{\tau_{\alpha}(\zeta)}
$$
The identification of the size-biased pick is a consequence of the Bayesian argument.  For statement [(ii)], note that
since $\gamma_{1-\alpha}\overset{d}=\tau_{\alpha}(\gamma_{(1-\alpha)/\alpha})$ and is independent of $P_{\alpha,1-\alpha}(y),$ representable as $\tau_{\alpha}(\gamma_{(1-\alpha)/\alpha}y)/\tau_{\alpha}(\gamma_{(1-\alpha)/\alpha}),$ it follows that
$$
\tilde{P}_{1}P_{\alpha,1-\alpha}(y)=\frac{\tau_{\alpha}(\gamma_{(1-\alpha)/\alpha}y)}
{\tau_{\alpha}(\gamma_{1}+\zeta)+\tau_{\alpha}(\gamma_{(1-\alpha)/\alpha})}
$$
Pushing terms together and using the fact that $\gamma_{(1-\alpha)/\alpha}+\gamma_{1}\overset{d}=\gamma_{1/\alpha}$ completes the result. Statement [(iii)] follows from standard beta-gamma algebra and the results we discussed above.
\end{proof}
It is evident that the results in (\ref{idCoag}) and (\ref{fragverify1}) leads to the validity of the fragmentation operator.
\begin{thm}\label{frag2} Let $(P_{i}):=(\tilde{P}_{1},(P^{*}_{k})),$ where $\tilde{P}_{1}$ is obtained by sized biased sampling, denote a $\mathbb{P}_{\alpha}(\zeta)$ sequence and let $(Q_{i})$ denote an independent $\mathrm{PD}(\alpha,1-\alpha)$ sequence.
Then $\mathrm{Frag}-\mathrm{PD}(\alpha,1-\alpha)((P_{i}),\cdot),$ defined by~(\ref{FragDGM}), satisfies
$$
\mathrm{Rank}(\tilde{P}_{1}(Q_{i}), (P^{*}_{k}))\sim \mathbb{P}_{\alpha}(\gamma_{1/\alpha}+\zeta).
$$
\end{thm}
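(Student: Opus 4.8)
The plan is to lift the identity from the level of mass partitions to the level of the associated bridges, exploiting the bijection discussed earlier between a law $\mathbb{P}$ on $\mathcal{P}$, the law of the corresponding $\mathbb{P}$-bridge, and the $\mathbb{P}$-EPPF. Since the theorem asserts only a distributional statement about ranked frequencies, it will suffice to show that the bridge obtained by applying $\mathrm{Frag}-\mathrm{PD}(\alpha,1-\alpha)$ to a $\mathbb{P}_{\alpha}(\zeta)$-bridge is distributed as a $\mathbb{P}_{\alpha}(\gamma_{1/\alpha}+\zeta)$-bridge; the distributional identities already recorded, (\ref{idCoag}) and (\ref{fragverify1}), then do essentially all the work, so that no pathwise coupling needs to be tracked.

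First I would record the bridge-level description of the operator applied to the input $(P_{i})=(\tilde{P}_{1},(P^{*}_{k}))$. Starting from (\ref{idCoag}), the $\mathbb{P}_{\alpha}(\zeta)$-bridge admits the representation $(1-\tilde{P}_{1})Q_{\alpha,\gamma_{1}+\zeta}(y)+\tilde{P}_{1}\indic_{(U_{1}\leq y)}$, in which $\tilde{P}_{1}$ is the size-biased pick, $U_{1}$ its uniform location, and $Q_{\alpha,\gamma_{1}+\zeta}$ is precisely the bridge built from the renormalized residual sequence $(P^{*}_{k})/(1-\tilde{P}_{1})$, which (\ref{idCoag}) identifies as a $\mathbb{P}_{\alpha}(\gamma_{1}+\zeta)$-bridge. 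The fragmentation operator (\ref{FragDGM}) replaces the single atom of mass $\tilde{P}_{1}$ by the cluster $(\tilde{P}_{1}Q_{i})_{i\geq 1}$ located at fresh iid uniforms, and leaves $(P^{*}_{k})$ untouched; translated to bridges via the standard correspondence used in \cite{Pit99, BertoinGoldschmidt2004, Dong2006} and \cite[Ch.~5]{Pit06}, this subdivides the length-$\tilde{P}_{1}$ interval occupied by the size-biased atom according to an independent $\mathrm{PD}(\alpha,1-\alpha)$-bridge $P_{\alpha,1-\alpha}$, so that the fragmented bridge is $(1-\tilde{P}_{1})Q_{\alpha,\gamma_{1}+\zeta}(y)+\tilde{P}_{1}P_{\alpha,1-\alpha}(y)$. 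Since $(Q_{i})$ is independent of the input $(P_{i})$ and the pair $(\tilde{P}_{1},Q_{\alpha,\gamma_{1}+\zeta})$ is a measurable function of $(P_{i})$, the bridge $P_{\alpha,1-\alpha}$ is independent of $(\tilde{P}_{1},Q_{\alpha,\gamma_{1}+\zeta})$, which is exactly the independence required in (\ref{fragverify1}). Invoking (\ref{fragverify1}) then gives $(1-\tilde{P}_{1})Q_{\alpha,\gamma_{1}+\zeta}(y)+\tilde{P}_{1}P_{\alpha,1-\alpha}(y)\overset{d}{=}Q_{\alpha,\gamma_{1/\alpha}+\zeta}(y)$, a $\mathbb{P}_{\alpha}(\gamma_{1/\alpha}+\zeta)$-bridge, and by the bijection the ranked sequence $\mathrm{Rank}(\tilde{P}_{1}(Q_{i}),(P^{*}_{k}))$ has law $\mathbb{P}_{\alpha}(\gamma_{1/\alpha}+\zeta)$, as claimed.

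The step I expect to be the main obstacle is the bookkeeping just described: verifying rigorously that the mass-partition operation $\mathrm{Rank}(\tilde{P}_{1}(Q_{i}),(P^{*}_{k}))$ corresponds, at the level of bridges, to taking the $\mathbb{P}_{\alpha}(\zeta)$-bridge in the form (\ref{idCoag}), leaving the residual part $(1-\tilde{P}_{1})Q_{\alpha,\gamma_{1}+\zeta}$ intact, and replacing only the size-biased atom $\tilde{P}_{1}\indic_{(U_{1}\leq y)}$ by the rescaled independent $\mathrm{PD}(\alpha,1-\alpha)$-bridge $\tilde{P}_{1}P_{\alpha,1-\alpha}(y)$ — including the claim that the resulting $P_{\alpha,1-\alpha}$ is genuinely independent of $(\tilde{P}_{1},Q_{\alpha,\gamma_{1}+\zeta})$. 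Once that correspondence is in hand, the conclusion is a one-line appeal to (\ref{fragverify1}) together with the bridge/mass-partition bijection.
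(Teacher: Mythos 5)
Your argument is correct and is exactly the route the paper intends: the paper offers no separate proof of Theorem~\ref{frag2}, stating only that it is ``evident'' from (\ref{idCoag}) and (\ref{fragverify1}), and your write-up simply fills in the bookkeeping — identifying the fragmented bridge with $(1-\tilde{P}_{1})Q_{\alpha,\gamma_{1}+\zeta}(y)+\tilde{P}_{1}P_{\alpha,1-\alpha}(y)$, with $P_{\alpha,1-\alpha}$ independent of the pair $(\tilde{P}_{1},Q_{\alpha,\gamma_{1}+\zeta})$ whose joint law is given by (\ref{idCoag}) — and then invoking (\ref{fragverify1}) and the bridge/mass-partition bijection. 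No gap.
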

\begin{rem}The description of the structural distribution of $\mathbb{P}_{\alpha}(\zeta)$ given in (\ref{structure}) is new. When $\zeta$ is a constant the result gives an explicit description of the structural distribution of a generalized gamma process that is discussed in~\cite[p.15]{Pit02}.
\end{rem}
\subsection{Coagulation via simple bridges}
The fragmentation result now shows us that we need to find a coagulation operation such that when the random input is a $\mathbb{P}_{\alpha}(\gamma_{1/\alpha}+\zeta)$ sequence in $\mathcal{P}$, the resulting distribution of the operator is $\mathbb{P}_{\alpha}(\zeta).$ We first show that one can provide a generalization of the coagulation operator defined in
Dong, Goldschmidt, and
Martin(DGM)~\cite{Dong2006} using simple bridges.

We can describe this type of operator through the inverse of simple bridges. Recall the discussion on exchangeable bridges where a randomized simple bridge $b_{s_{1}}$ is defined in (\ref{simplebridge}). The inverse of a simple bridge is denoted as
$b^{-1}_{s_{1}}.$ From
Bertoin(\cite{BerFrag}, eq. (4.14), p. 194) one sees that for
$(U'_{k})_{k\ge 1}$ iid Uniform$[0,1]$ random variables
independent of $U_{1},$
$$b^{-1}_{s_{1}}(U'_{k})=U_{1}, {\mbox { iff }}
U'_{k}\in(s_{0}U_{1}, (1-s_{0})+s_{0}U_{1}),$$ having  length
$s_{1}=1-s_{0}$ and otherwise
$b^{-1}_{s_{1}}(U'_{k})\overset{d}=U^{0}_{k}$ has an independent
Uniform$[0,1]$ distribution, that is for $U'_{k}\in
[0,s_{0}U_{1}]\cup[s_{0}U_{1}+1-s_{0},1].$ More precisely, define
for each $k,$
\begin{equation}
I_{k}\overset{d}=\indic_{(b^{-1}_{s_{1}}(U'_{k})=U_{1})}\overset{d}=\indic_{(U'_{k}\leq
s_{1})} \label{indicator}
\end{equation}
then
$$
\mathbb{P}(b^{-1}_{s_{1}}(U'_{k})\leq y|I_{k}=0)=y, y\in [0,1].
$$
Now in general for some exchangeable bridge of the form $P(y)=\sum_{k=1}^{\infty}P_{k}\indic_{(U'_{k}\leq y)},$ one has
$$
P(b_{s_{1}}(y))\overset{d}=\sum_{k=1}^{\infty}P_{k}\indic_{(b^{-1}_{s_{1}}(U'_{k})\leq
y)}\overset{d}=\indic_{(U_{1}\leq
y)}[\sum_{k:I_{k}=1}P_{k}]+\sum_{\{k:I_{k}=0\}}P_{k}\indic_{(U^{0}_{k}\leq
y)}
$$
where
$$
P(s_{1})\overset{d}=\sum_{\{k:I_{k}=1\}}P_{k}.
$$
Hence the law of sequence $(Q_{k})\in \mathcal{P},$ such that $P\circ b_{s_{1}}(y)\overset{d}=\sum_{k=1}^{\infty}Q_{k}\indic_{(U_{k}\leq y)},$ can be expressed as
\begin{equation}
(Q_{i})\overset{d}=\mathrm{Rank}((P_{k}:
I_{k}=0), \sum_{\{k:I_{k}=1\}}P_{k}). \label{simplerank}
\end{equation} Hence by fixing $(P_{i})=(p_{i})$ in~(\ref{simplerank}), for an input $(p_{i})$ we define the Coag operator  $s_{1}-\mathrm{Coag}((p_{i}),\cdot)$ as the operator,
\begin{equation}
s_{1}-\mathrm{Coag}((p_{i}),\cdot):=\mathrm{Rank}((p_{k}:
I_{k}=0), \sum_{\{k:I_{k}=1\}}p_{k}). \label{EDGM} \end{equation}

When $s_{1}=\beta_{(1-\alpha)/\alpha, (\theta+\alpha)/\alpha},$~(\ref{EDGM}) coincides with the operator in Dong, Goldschmidt, and
Martin~\cite{Dong2006}. In that case, the beta variable is chosen apriori to be independent of the input. The next result shows that we need to choose $s_{1}$ generally dependent on the input.

\begin{thm}\label{coag2}Let $b_{s_{1}}$ denote a simple randomized bridge with
\begin{equation}
s_{1}:=\beta_{(\frac{1-\alpha}{\alpha},1)}\frac{\gamma_{1/\alpha}}{\gamma_{1/\alpha}+\zeta}
\label{bridges}
\end{equation}
where the beta variable is taken independent of the independent pair $(\gamma_{1/\alpha},\zeta),$ Using this same pair define the exchangeable bridge
$$
Q_{\alpha,\gamma_{1/\alpha}+\zeta}(y)=\frac{\tau_{\alpha}((\zeta+\gamma_{1/\alpha})
y)}{\tau_{\alpha}(\zeta+\gamma_{1/\alpha})}=\sum_{k=1}^{\infty}P_{k}\indic{(U_{k}\leq p)}{\mbox { and }}T=\frac{\tau_{\alpha}(\gamma_{1/\alpha}+\zeta)}{{(\gamma_{1/\alpha}+\zeta)}^{1/\alpha}}
$$
Marginally $Q_{\alpha,\gamma_{1/\alpha}+\zeta}$ is a $\mathbb{P}_{\alpha}(\gamma_{1/\alpha}+\zeta)$-bridge with $\alpha$-DIVERSITY $T^{-\alpha}.$
\begin{enumerate}
\item[(i)]Then, for each $y\in [0,1]$
$$
Q_{\alpha,\gamma_{1/\alpha}+\zeta}(b_{s_{1}}(y))\overset{d}=Q_{\alpha,\zeta}(y),
$$
where $Q_{\alpha,\zeta}$ is a $\mathbb{P}_{\alpha}(\zeta)$-bridge.
\item[(ii)]Then it follows that $s_{1}-\mathrm{Coag}((P_{k}),\cdot)$ has distribution,
$$
\mathrm{Rank}((P_{k}:
I_{k}=0), \sum_{\{k:I_{k}=1\}}P_{k})\sim\mathbb{P}_{\alpha}(\zeta)$$
\item[(iii)]Let $(P_{k})=(p^{(v)}_{k})$ denote the realization such that $T=v$ then the $s_{1}-\mathrm{Coag}((p^{(v)}_{k}),\cdot)$
 given $(p^{(v)}_{k})$ is equivalent in distribution
 $$
 \mathrm{Rank}((p^{(v)}_{k}:
I^{(v)}_{k}=0), \sum_{\{k:I^{(v)}_{k}=1\}}p^{(v)}_{k}),$$
where $I^{(v)}_{k}\overset{d}=\indic_{(U_{k}\leq s^{(v)}_{1})},$ and $s^{(v)}_{1}$ has the conditional distribution of $s_{1}$ given $(p^{(v)}_{k}).$ In particular the distribution of $s^{(v)}_{1}$ equates with the distribution of $s_{1}|T=v.$ So
$$
s^{(v)}_{1}\overset{d}=\beta_{(\frac{1-\alpha}{\alpha},1)}W^{(v)}
$$
where, for $y\in [0,1],$ the density of $1-W^{(v)}$ is given proportional to
$$
{(1-y)}^{1/\alpha-1}y^{-1/\alpha-1}
\mathbb{E}[{\mbox e}^{-vy^{1/\alpha}\zeta^{1/\alpha}}{\mbox e}^{\zeta}\zeta^{1/\alpha}].
$$
\end{enumerate}
\end{thm}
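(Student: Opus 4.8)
The plan is to deduce~[(i)] from the bridge decomposition~(\ref{idCoag}) already established, to read off~[(ii)] from~[(i)] together with the bijection between exchangeable bridges and laws on $\mathcal{P}$, and to obtain~[(iii)] by a Bayes-rule computation of the conditional law of $s_1$ given the input sequence.

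For~[(i)] I would work directly with the generalized gamma subordinator $\tau_\alpha$. Writing $b_{s_1}(y)=(1-s_1)y+s_1\indic_{(U_1\le y)}$ and using the form~(\ref{bridges}) of $s_1$ gives $(\gamma_{1/\alpha}+\zeta)s_1=\beta_{(\frac{1-\alpha}{\alpha},1)}\gamma_{1/\alpha}$ and $(\gamma_{1/\alpha}+\zeta)(1-s_1)=(1-\beta_{(\frac{1-\alpha}{\alpha},1)})\gamma_{1/\alpha}+\zeta$. The key input is the classical beta--gamma identity for the independent pair $(\beta_{(\frac{1-\alpha}{\alpha},1)},\gamma_{1/\alpha})$, the same algebra already used in the proof of~(\ref{fragverify1}): the pair $\bigl((1-\beta_{(\frac{1-\alpha}{\alpha},1)})\gamma_{1/\alpha},\,\beta_{(\frac{1-\alpha}{\alpha},1)}\gamma_{1/\alpha}\bigr)$ is distributed as $(\gamma_1,\gamma_{(1-\alpha)/\alpha})$ with the two gamma variables independent and independent of $\zeta$. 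Hence
$$(\gamma_{1/\alpha}+\zeta)\,b_{s_1}(y)\;\overset{d}{=}\;(\gamma_1+\zeta)\,y+\gamma_{(1-\alpha)/\alpha}\,\indic_{(U_1\le y)},$$
a piecewise linear time change with a single jump, the variables $\gamma_1,\gamma_{(1-\alpha)/\alpha},\zeta,U_1$ being mutually independent. Evaluating $\tau_\alpha$ along this time change and using stationary independent increments to splice out the skipped time interval of length $\gamma_{(1-\alpha)/\alpha}$, one obtains $\tau_\alpha\bigl((\gamma_1+\zeta)y+\gamma_{(1-\alpha)/\alpha}\indic_{(U_1\le y)}\bigr)\overset{d}{=}\tau'_\alpha\bigl((\gamma_1+\zeta)y\bigr)+\Delta\,\indic_{(U_1\le y)}$, where $\tau'_\alpha$ is a copy of $\tau_\alpha$ over $[0,\gamma_1+\zeta]$, the atom $\Delta\overset{d}{=}\tau_\alpha(\gamma_{(1-\alpha)/\alpha})\overset{d}{=}\gamma_{1-\alpha}$ is independent of $\tau'_\alpha$, and $\tau_\alpha(\gamma_{1/\alpha}+\zeta)\overset{d}{=}\tau'_\alpha(\gamma_1+\zeta)+\Delta$. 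Dividing numerator and denominator of $Q_{\alpha,\gamma_{1/\alpha}+\zeta}(b_{s_1}(y))=\tau_\alpha((\gamma_{1/\alpha}+\zeta)b_{s_1}(y))/\tau_\alpha(\gamma_{1/\alpha}+\zeta)$ then produces
$$Q_{\alpha,\gamma_{1/\alpha}+\zeta}(b_{s_1}(y))\;\overset{d}{=}\;\frac{\tau'_\alpha((\gamma_1+\zeta)y)+\gamma_{1-\alpha}\,\indic_{(U_1\le y)}}{\tau'_\alpha(\gamma_1+\zeta)+\gamma_{1-\alpha}},$$
which is exactly the right-hand side of~(\ref{idCoag}), a $\mathbb{P}_\alpha(\zeta)$-bridge; moreover the argument yields this identity jointly in $y$, i.e.\ at the level of the whole bridge.

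Statement~[(ii)] then follows at once: by the construction recalled around~(\ref{simplerank})--(\ref{EDGM}) the ranked atom sequence of $Q_{\alpha,\gamma_{1/\alpha}+\zeta}\circ b_{s_1}$ equals $\mathrm{Rank}\bigl((P_k:I_k=0),\sum_{\{k:I_k=1\}}P_k\bigr)$, and since~[(i)] identifies this composition as a $\mathbb{P}_\alpha(\zeta)$-bridge, the sequence has law $\mathbb{P}_\alpha(\zeta)$. For~[(iii)], the $\alpha$-DIVERSITY, hence $T$, is a deterministic functional of $(P_k)$, so conditioning on $(P_k)=(p^{(v)}_k)$ in particular fixes $T=v$; moreover, given $T=v$ the input has law $\mathrm{PD}(\alpha|v)$, which no longer depends on $(\gamma_{1/\alpha},\zeta)$, so the conditional law of $s_1$ given $(P_k)=(p^{(v)}_k)$ coincides with that of $s_1$ given $T=v$ (the same reasoning as in the proof of Theorem~\ref{coag1}). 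Since $\beta_{(\frac{1-\alpha}{\alpha},1)}$ is independent of $(\gamma_{1/\alpha},\zeta,\tau_\alpha)$, this conditional law is $\beta_{(\frac{1-\alpha}{\alpha},1)}W^{(v)}$ with $W^{(v)}\overset{d}{=}\bigl(\gamma_{1/\alpha}/(\gamma_{1/\alpha}+\zeta)\mid T=v\bigr)$. Recalling that $T\overset{d}{=}\tau_\alpha(r)/r^{1/\alpha}$ with $r=\gamma_{1/\alpha}+\zeta$ has conditional density $f_\alpha(v)\,\mathrm{e}^{-(vr^{1/\alpha}-r)}$ given $r$, Bayes' rule gives the posterior of $(\gamma_{1/\alpha},\zeta)$ proportional to $g^{1/\alpha-1}\,\mathrm{e}^{-v(g+\zeta)^{1/\alpha}}\,\mathrm{e}^{\zeta}\,F_{\zeta}(d\zeta)\,dg$, the $\mathrm{e}^{\gamma_{1/\alpha}}$ from the likelihood cancelling the $\mathrm{e}^{-\gamma_{1/\alpha}}$ from the prior; a one-dimensional change of variables $g\mapsto 1-W=\zeta/(\gamma_{1/\alpha}+\zeta)$ at fixed $\zeta$, followed by integration over $\zeta$, then yields the stated density of $1-W^{(v)}$.

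The step I expect to demand the most care is the splicing identity in~[(i)]: before the beta--gamma substitution the simple bridge $b_{s_1}$ and the subordinator $\tau_\alpha$ share the variables $\gamma_{1/\alpha}$ and $\zeta$, and one must verify that after the substitution the composed time change decouples $\tau_\alpha$ into an independent $\mathbb{P}_\alpha(\gamma_1+\zeta)$-bridge and an independent $\gamma_{1-\alpha}$-atom, which is precisely what makes the output coincide with~(\ref{idCoag}) rather than with a correlated modification of it. The one other delicate point is the reduction in~[(iii)] of conditioning on $(P_k)$ to conditioning on $T=v$, which rests on the Poisson--Kingman conditional structure; granting these, the remaining manipulations are routine.
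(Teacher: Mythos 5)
Your proposal follows the paper's proof essentially step for step: part (i) via the beta--gamma identity $\bigl(\beta_{(\frac{1-\alpha}{\alpha},1)}\gamma_{1/\alpha},(1-\beta_{(\frac{1-\alpha}{\alpha},1)})\gamma_{1/\alpha}\bigr)\overset{d}=(\gamma_{(1-\alpha)/\alpha},\gamma_{1})$ together with splicing the subordinator over the skipped interval so that the composition reduces to the right-hand side of (\ref{idCoag}) (the paper writes exactly this, using $\tau_{\alpha}(\gamma_{(1-\alpha)/\alpha})\overset{d}=\gamma_{1-\alpha}$); part (ii) immediately from (i); and part (iii) from the Pitman--Yor conditional independence of $(P_{k})$ and $(\gamma_{1/\alpha},\zeta)$ given $T$, followed by the Bayes computation that the paper leaves as ``straightforward.'' One caveat on the last step: carrying your (correctly set up) posterior $g^{1/\alpha-1}{\mbox e}^{-v(g+\zeta)^{1/\alpha}}{\mbox e}^{\zeta}\,dg\,F_{\zeta}(d\zeta)$ through the change of variables $y=\zeta/(g+\zeta)$ gives the factor ${\mbox e}^{-v\zeta^{1/\alpha}y^{-1/\alpha}}$, not the printed ${\mbox e}^{-vy^{1/\alpha}\zeta^{1/\alpha}}$; the printed exponent is a typo (as written the density would not be integrable against $y^{-1/\alpha-1}$ near $y=0$), so your derivation is sound but does not literally reproduce the displayed formula.
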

\begin{proof}Statement [(i)] follows from the equivalence in~(\ref{idCoag}), since it is easy to see that, for $y\in [0,1],$
$$
Q_{\alpha,\gamma_{1/\alpha}+\zeta}(b_{s_{1}}(y))=\frac{\tau_{\alpha}((\gamma_{1}+\zeta)
y)+\tau_{\alpha}(\gamma_{(1-\alpha)/\alpha}\indic_{(U_{1}\leq
y)})}{\tau_{\alpha}(\gamma_{1}+\zeta)+\tau_{\alpha}(\gamma_{(1-\alpha)/\alpha})}
$$
and $\tau_{\alpha}(\gamma_{(1-\alpha)/\alpha}\indic_{(U_{1}\leq
y)})\overset{d}=\gamma_{1-\alpha}\indic_{(U_{1}\leq
y)}.$ [(ii)] is immediate from [(i)]. For [(iii)], we again appeal to Pitman and Yor~\cite[p. 877, see eq. (96) and (97)]{PY97}. That is, conditioning on $T$ it follows that $(P_{k})$ and $s_{1}$ are conditionally independent. It is then straightforward to obtain the conditional density of $s_{1}$ given $T.$
\end{proof}
\subsection{Duality}We can now describe the duality relation in terms of the following diagram
\begin{eqnarray}
  & \beta_{(\frac{1-\alpha}{\alpha},1)}\frac{\gamma_{1/\alpha}}{\gamma_{1/\alpha}+\zeta}-\mathrm{Coag} &  \nonumber\\
  \mathbb{P}_{\alpha}(\gamma_{1/\alpha}+\zeta) & \DoubleRLarrow{0.5in} & \mathbb{P}_{\alpha}(\zeta) \label{DGMgen}\\
  & \mathrm{Frag}-\mathrm{PD}(\alpha,1-\alpha) &\nonumber
\end{eqnarray}
For $\theta\ge 0$ this reduces to (\ref{DGMd}) by setting $\zeta\overset{d}=\gamma_{\theta/\alpha}.$ Furthermore setting $\zeta=\gamma_{(n-1)/\alpha}+\zeta,$ in~(\ref{DGMgen}) leads to a recursion representable as,
\begin{eqnarray}
  & \beta_{(\frac{1-\alpha}{\alpha},\frac{n-1+\alpha}{\alpha})}\frac{\gamma_{n/\alpha}}{\gamma_{n/\alpha}+\zeta}-\mathrm{Coag} &  \nonumber\\
  \mathbb{P}_{\alpha}(\gamma_{n/\alpha}+\zeta) & \DoubleRLarrow{0.5in} & \mathbb{P}_{\alpha}(\gamma_{(n-1)/\alpha}+\zeta) \nonumber\\
  & \mathrm{Frag}-\mathrm{PD}(\alpha,1-\alpha) &\nonumber
\end{eqnarray}
where
$$
\beta_{(\frac{1-\alpha}{\alpha},\frac{n-1+\alpha}{\alpha})}\frac{\gamma_{n/\alpha}}{\gamma_{n/\alpha}+\zeta}\overset{d}=\beta_{(\frac{1-\alpha}{\alpha},1)}\frac{\gamma_{1/\alpha}}{\gamma_{n/\alpha}+\zeta}.$$
We close with a formal statement.

\begin{thm}\label{Dual}Suppose that $X$ and $Y$ are sequences in ${\mathcal P}.$
Then, using the descriptions in Theorems~\ref{frag2} and~\ref{coag2}, the following statements are equivalent.
\begin{enumerate}
\item[(i)]$X\sim \mathbb{P}_{\alpha}(\zeta)$ and
conditional on $X,$
$Y\overset{d}=\mathrm{Frag}-\mathrm{PD}(\alpha,1-\alpha)(X,\cdot).$
\item[(ii)]$Y\sim \mathbb{P}_{\alpha}(\gamma_{1/\alpha}+\zeta)$ and conditional on $Y,$
$X\sim
\beta_{(\frac{1-\alpha}{\alpha},1)}\frac{\gamma_{1/\alpha}}{\gamma_{1/\alpha}+\zeta}-\mathrm{Coag}(Y,\cdot).$

Where in particular, for $Y=(p^{(v)}_{k}),$ indicating its $\alpha-$DIVERSITY or local time has value $v^{-\alpha},$
$$
\beta_{(\frac{1-\alpha}{\alpha},1)}\frac{\gamma_{1/\alpha}}{\gamma_{1/\alpha}+\zeta}-(\mathrm{Coag}((p^{(v)}_{k}),\cdot)
\overset{d}=s^{(v)}_{1}-(\mathrm{Coag}((p^{(v)}_{k}),\cdot).
$$
Which is described in [(iii)] of Theorem~\ref{coag2}.
\end{enumerate}
\end{thm}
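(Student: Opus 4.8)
The plan is to observe that any joint law on $\mathcal P\times\mathcal P$ is determined by one of its marginals together with the corresponding conditional, so that statement (i) holds for $(X,Y)$ exactly when $\mathrm{Law}(X,Y)$ equals the law $\mu_{\mathrm{frag}}$ with first marginal $\mathbb P_{\alpha}(\zeta)$ and conditional $\mathrm{Frag}-\mathrm{PD}(\alpha,1-\alpha)$, and statement (ii) holds exactly when $\mathrm{Law}(X,Y)=\mu_{\mathrm{coag}}$, the law with second marginal $\mathbb P_{\alpha}(\gamma_{1/\alpha}+\zeta)$ and conditional the simple-bridge Coag of~(\ref{bridges}). Thus the theorem reduces to the single claim $\mu_{\mathrm{frag}}=\mu_{\mathrm{coag}}$, which I would establish by exhibiting one coupling of $(X,Y)$ that visibly carries both descriptions; the equivalence (i)$\Leftrightarrow$(ii) then follows at once, in both directions, since each statement is equivalent to $\mathrm{Law}(X,Y)$ being this common law.

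I would build the coupling at the level of bridges using the identities~(\ref{idCoag}) and~(\ref{fragverify1}). Fix an independent pair $(\gamma_{1/\alpha},\zeta)$, write $\gamma_{1/\alpha}\overset{d}=\gamma_{1}+\gamma_{(1-\alpha)/\alpha}$, form the $\mathbb P_{\alpha}(\gamma_{1}+\zeta)$-bridge $Q_{\alpha,\gamma_{1}+\zeta}$ from the first slab of the subordinator, put $\gamma_{1-\alpha}:=\tau_{\alpha}(\gamma_{(1-\alpha)/\alpha})$ and $\widetilde P_{1}:=\gamma_{1-\alpha}/(\tau_{\alpha}(\gamma_{1}+\zeta)+\gamma_{1-\alpha})$, and let $P_{\alpha,1-\alpha}(y)=\tau_{\alpha}(\gamma_{(1-\alpha)/\alpha}y)/\tau_{\alpha}(\gamma_{(1-\alpha)/\alpha})$ be the $\mathrm{PD}(\alpha,1-\alpha)$-bridge carried by the second slab, which (as used in the derivation of~(\ref{fragverify1})) is independent of $\gamma_{1-\alpha}$ and hence of $(\widetilde P_{1},Q_{\alpha,\gamma_{1}+\zeta})$. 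Let $X$ be the ranked sequence of $Q_{\alpha,\zeta}(y)=(1-\widetilde P_{1})Q_{\alpha,\gamma_{1}+\zeta}(y)+\widetilde P_{1}\indic_{(U_{1}\le y)}$ and $Y$ the ranked sequence of $Q_{\alpha,\gamma_{1/\alpha}+\zeta}(y)=(1-\widetilde P_{1})Q_{\alpha,\gamma_{1}+\zeta}(y)+\widetilde P_{1}P_{\alpha,1-\alpha}(y)$. Then $X\sim\mathbb P_{\alpha}(\zeta)$ with $\widetilde P_{1}$ its size-biased pick and remainder $(P^{*}_{k})=(1-\widetilde P_{1})\cdot(\text{sequence of }Q_{\alpha,\gamma_{1}+\zeta})$, and $Y=\mathrm{Rank}(\widetilde P_{1}(Q_{i}),(P^{*}_{k}))$ with $(Q_{i})$ the $\mathrm{PD}(\alpha,1-\alpha)$ sequence underlying $P_{\alpha,1-\alpha}$, independent of $X$. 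By~(\ref{FragDGM}) this is $\mathrm{Frag}-\mathrm{PD}(\alpha,1-\alpha)(X,\cdot)$, so the coupling has law $\mu_{\mathrm{frag}}$, and by Theorem~\ref{frag2} its second marginal is $\mathbb P_{\alpha}(\gamma_{1/\alpha}+\zeta)$.

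It then remains to recognize the conditional law of $X$ given $Y$ in this coupling as the Coag operator of~(\ref{bridges}), for which I would invoke Theorem~\ref{coag2}. Its part~(i) shows that, built from the same pair $(\gamma_{1/\alpha},\zeta)$ and an independent $\beta_{(\frac{1-\alpha}{\alpha},1)}$, the simple bridge $b_{s_{1}}$ with $s_{1}$ as in~(\ref{bridges}) satisfies $Q_{\alpha,\gamma_{1/\alpha}+\zeta}\circ b_{s_{1}}\overset{d}=Q_{\alpha,\zeta}$; unwinding $b_{s_{1}}$ exhibits the merging it performs as exactly the regrouping of the $\mathrm{PD}(\alpha,1-\alpha)$-fragments of the size-biased block back into one block of mass $\widetilde P_{1}$, that is, the simple-bridge coagulation~(\ref{simplerank})--(\ref{EDGM}). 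Hence the pair $(Y,X)$ of the coupling is also realized as $(Q_{\alpha,\gamma_{1/\alpha}+\zeta},Q_{\alpha,\gamma_{1/\alpha}+\zeta}\circ b_{s_{1}})$, so $X$ given $Y$ is the $s_{1}$-Coag. Finally, conditioning on the $\alpha$-DIVERSITY of $Y$, namely on $T=\tau_{\alpha}(\gamma_{1/\alpha}+\zeta)/(\gamma_{1/\alpha}+\zeta)^{1/\alpha}=v$, makes the input sequence and $s_{1}$ conditionally independent by Pitman and Yor~\cite[p.~877]{PY97}, so for $Y=(p^{(v)}_{k})$ the conditional operator is the $s^{(v)}_{1}$-Coag of Theorem~\ref{coag2}(iii) with $s^{(v)}_{1}\overset{d}=s_{1}\mid T=v$. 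This yields $\mu_{\mathrm{frag}}=\mu_{\mathrm{coag}}$, hence (i)$\Leftrightarrow$(ii); the displayed ``in particular'' clause is exactly Theorem~\ref{coag2}(iii).

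The main obstacle, I expect, is the step identifying the \emph{dependent} simple-bridge coagulation~(\ref{bridges}) as the exact distributional inverse of the $\mathrm{PD}(\alpha,1-\alpha)$-fragmentation: one must verify that, after re-ranking $Y$, the blocks merged by $b_{s_{1}}$ are precisely the fragments that Frag produced from the size-biased pick of $X$ --- of total mass $\widetilde P_{1}$ --- and that the marginal law of that merging proportion comes out to~(\ref{bridges}), equivalently that the size-biased structural law~(\ref{structure}) is honored. The interplay between ranking and the simple-bridge decomposition is delicate, but is kept tractable by passing to the conditional picture given $T=v$, where the input sequence and $s_{1}$ decouple; beyond that the argument is the beta--gamma bookkeeping already carried out for~(\ref{idCoag})--(\ref{structure}) and in Theorems~\ref{frag2} and~\ref{coag2}.
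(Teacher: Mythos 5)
Your proposal is correct and follows the paper's intended route: the paper states Theorem~\ref{Dual} without an explicit proof, as a formal summary of Theorems~\ref{frag2} and~\ref{coag2}, and the justification it relies on is precisely the common slab-decomposition coupling you make explicit --- splitting $\gamma_{1/\alpha}=\gamma_{1}+\gamma_{(1-\alpha)/\alpha}$, using the identities (\ref{idCoag}) and (\ref{fragverify1}), and recognizing (via the proof of Theorem~\ref{coag2}(i)) the simple-bridge coagulation as the pathwise inverse of the $\mathrm{PD}(\alpha,1-\alpha)$-fragmentation on that common probability space, with the conditioning on $T=v$ handling the law of $s_{1}$ given the input. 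Your observation that matching marginals alone would not suffice, and that one must exhibit a single joint law carrying both conditional descriptions, correctly supplies the step the paper leaves implicit.
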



\end{document}